%% file: main.tex
\documentclass{amsart}
\usepackage[T1]{fontenc}
\usepackage[utf8]{inputenc}
\usepackage[english]{babel}
\babelprovide[import]{swedish}
\usepackage{graphicx} 
\usepackage{amsmath}
\usepackage{array}
\usepackage{amssymb}
\usepackage{amsthm}
\usepackage{comment}
\usepackage[expansion=false]{microtype}
\usepackage{url}
\usepackage{tabularx}
\usepackage{hyperref}

\DeclareMathOperator{\tr}{tr}

\newtheorem{theorem}{Theorem}[section]

\newtheorem{proposition}[theorem]{Proposition}
\theoremstyle{definition}

\theoremstyle{remark}

\title[Weighted Laplace Spaces for Spectral Measures]{Weighted Laplace Spaces for Spectral Measures and Rational Approximation}
\author{Stefan Jakobsson}
\address{GE Aerospace, Colibrium Additive}
\email{stefan.jakobsson@geaerospace.com}
\author{Alice Kozakevicius}
\address{Skolförvaltningen, Mölndal stad, Sweden}
\email{alice.de-jesus-kozakevicius@molndal.se}
\author{Stig Larsson}
\address{Chalmers University of Technology, Department of Mathematical Sciences}
\email{stig.larsson@chalmers.se}
\keywords{weighted Laplace space, reproducing kernel Hilbert space,
spectral measure, rational approximation, finite element eigenvalue approximation}
\subjclass[2020]{Primary 65N25; Secondary 46E22, 41A20, 65N30}
\date{\today}
\begin{document}
\begin{abstract}
    We introduce the weighted Laplace space $H_w$, an RKHS of Laplace transforms on $(0,\infty)$, and
    study spectral measures in its dual space $H'_w$.
    For conforming FEM discretizations of the Dirichlet Laplacian on bounded Lipschitz domains, we
    prove the dual-norm inequality $\|\mu_h\|_{H'_w} \leq \|\mu\|_{H'_w}$, where $\mu = \sum_k
        \delta_{\lambda_k}$ and $\mu_h = \sum_k \delta_{\lambda_{k,h}}$.
    The proof combines min-max monotonicity of FEM eigenvalues with a heat-trace representation of the
    dual norm.
    We then analyze $H_w$-adapted rational approximation of shifted symbols
    $\phi(x)=(x+\kappa^2)^{-\beta}$ and give a conditional transfer principle for estimates proved in
    the corresponding weighted Laplace pre-image norm.
    Via dual pairing, the norm inequality yields uniform bounds for finite spectral sums and related
    transformed observables.
\end{abstract}
\maketitle

\input{intro}
\input{fem}
\input{weightedTransformSpaces}

\input{weightedLaplaceSpaces}

\input{rational-approximation}

\input{matern-application}
\input{numerical-verification}
\input{conclusions}
\appendix
\input{appendix-rkhs-greens-functions}
\input{appendix-randomized-trace-estimation}
\input{appendix-rational-approximation-gradient}
\input{appendix-tensor-product-fem}
\bibliographystyle{plain}
\bibliography{refs}
\end{document}

%% file: intro.tex
\section{Introduction}
The spectral approximation of elliptic operators is a central topic in numerical analysis and
scientific computing \cite{BabuskaOsborn1991,Boffi2010}.
For a bounded Lipschitz domain $\Omega \subset \mathbb{R}^d$, let $\{\lambda_k\}_{k=1}^\infty$
denote the Dirichlet eigenvalues of the Laplacian and let $\{\lambda_{k,h}\}_{k=1}^{N_h}$ denote
the corresponding eigenvalues of a conforming finite element discretization.
We define the spectral measures
\begin{equation}
  \mu = \sum_{k=1}^\infty \delta_{\lambda_k}, \qquad
  \mu_h = \sum_{k=1}^{N_h} \delta_{\lambda_{k,h}}.
\end{equation}
In this work we study these spectral measures in a weighted dual Hilbert space framework adapted to
Laplace transforms.

We first formulate this framework abstractly through weighted transform spaces and their duals.
The weighted Laplace space $H_w$ is then obtained by specializing the transform to the Laplace
transform, while the spectral application enters through the heat trace of the measures above.
This provides a direct bridge from the general transform-side construction to the concrete norms
used for spectral comparison.

We introduce the weighted Laplace space $H_w$, a reproducing kernel Hilbert space of Laplace
transforms on $(0,\infty)$ with weight $w(t)$.
Under the relevant integrability conditions, its dual space $H'_w$ contains the continuous and
discrete spectral measures above.
The dual norm admits a heat-trace representation, which makes it particularly well suited for
comparing continuous and discrete spectra.

In particular, the duality pairing immediately yields a quantitative estimate
for finite spectral sums: for any $\psi\in H_w$,
\begin{equation}\label{eq:intro-duality-bound}
  \left|\sum_{k=1}^n \psi(\lambda_{k,h})\right|
  \leq \|\psi\|_{H_w}\,\|\mu_h\|_{H'_w}
  \leq \|\psi\|_{H_w}\,\|\mu\|_{H'_w}.
\end{equation}
This gives a concrete duality formulation of the comparison between discrete and continuous
spectra.
More importantly, \eqref{eq:intro-duality-bound} is the key transfer principle from
spectral-measure control to application-level error bounds: once $\|\mu_h\|_{H'_w}$ is controlled,
any observable represented by $\psi\in H_w$ for which the spectral pairing is defined is controlled
through the same dual pairing.
In this sense, the dual pairing inequality is central rather than merely technical.

The present paper is inspired by the rational SPDE approach of Bolin and Kirchner
\cite{BolinKirchner2020} and by the explicit connection between Gaussian fields and Gaussian Markov
random fields established by Lindgren, Rue and Lindström \cite{LindgrenRueLindstrom2011}.
This is exactly the type of inequality that matters in SPDE applications
\cite{LangLarssonSchwab2013,AnderssonLarsson2016,AnderssonKruseLarsson2016}, where one needs
combined estimates of the discrete spectra relative to the continuous model.

Our main contributions are:
\begin{itemize}
  \item formulation of weighted transform spaces and their specialization to
        weighted Laplace spaces $H_w$, together with the heat-trace dual norm representation;
  \item a dual-norm inequality for conforming FEM spectral measures,
        \begin{equation}\label{eq:intro-main}
          \|\mu_h\|_{H'_w} \leq \|\mu\|_{H'_w};
        \end{equation}
  \item transfer from spectral-measure control to uniform observable bounds via
        dual pairing;
  \item $H_w$-adapted rational approximation of shifted symbols, with explicit
        variable-projection optimization and conditional transfer of weighted exponential-sum estimates;
  \item numerical evidence that metric-adapted optimization improves finite-$n$
        accuracy at both norm and observable levels.
\end{itemize}

The eigenvalue monotonicity itself is classical; the novelty lies in the weighted dual-space
formulation, the observable-level transfer principle, and the unification of FEM spectral
comparison and rational approximation in one functional framework.

The inequality in \eqref{eq:intro-main} follows from Rayleigh--Ritz monotonicity and the identity
$\|\mu\|_{H'_w}^2=\int_0^\infty Z(t)^2w(t)\,dt$, where $Z(t)=\sum_{k=1}^\infty e^{-\lambda_k t}$ is
the heat trace.
Thus the discrete spectral measure has no more weighted heat-trace energy than the continuous one.
Combined with \eqref{eq:intro-duality-bound}, this gives uniform pairing bounds for finite spectral
sums and related transformed quantities.

Beyond spectral measures, we study approximation in the primal space $H_w$.
For shifted model functions of the form $\phi_\kappa(x) = (x+\kappa^2)^{-\beta}$, we show how
external approximation estimates transfer to optimal rational approximants in the $H_w$ norm.

Finally, Weyl's law gives the large-$\gamma$ asymptotic behavior of the continuous spectral measure
norm for weights $w(t)=t^\alpha e^{-\gamma t}$, with $\alpha>d-1$: \[ \|\mu\|_{H'_w}^2 \sim
  \frac{|\Omega|^2\Gamma(\alpha-d+1)}{(4\pi)^d\gamma^{\alpha-d+1}} \qquad \text{as }\gamma\to\infty.
\]
This is a small-time Weyl-law asymptotic.
In contrast, as $\gamma\downarrow0$, the exact heat-trace integral remains finite and converges to
the corresponding value with $w(t)=t^\alpha$, under the same integrability condition; the
large-time exponential decay of the heat trace controls this regime.

Taken together, these results establish a coherent theory linking finite element spectral
approximation, heat-trace energy, and rational approximation in weighted Laplace spaces.
Section~\ref{sec:weighted-transform-spaces} develops the abstract transform-side framework,
Section~\ref{sec:LaplaceSpaces} specializes it to weighted Laplace spaces and develops the
heat-trace dual-norm inequality for spectral measures.
Section~\ref{sec:rational-approx} develops the $H_w$ rational-approximation theory and optimization
formulation for shifted symbols, including conditional rate transfer, and
Section~\ref{sec:num-verification} provides numerical verification and application-facing
comparisons.

%% file: fem.tex
\section{Conforming FEM Spectral Preliminaries}
\label{sec:fem-background}
Let $\Omega \subset \mathbb{R}^d$ be a bounded connected domain with Lipschitz boundary.
We consider the Dirichlet Laplace eigenvalue problem with homogeneous boundary conditions.
We seek eigenpairs $(\lambda_k,u_k)$ satisfying
\begin{equation}\label{eq:continuous-eigenproblem}
    \begin{aligned}
        -\Delta u_k & = \lambda_k u_k &  & \text{in } \Omega,         \\
        u_k         & = 0             &  & \text{on } \partial\Omega,
    \end{aligned}
\end{equation}
with eigenvalues
$0 < \lambda_1 < \lambda_2 \leq \cdots \to \infty$ and
$L^2$-orthonormal eigenfunctions $\{u_k\}_{k\ge 1}$.
For disconnected domains, the same arguments apply componentwise.

Let $V_h \subset H^1_0(\Omega)$ be a conforming finite element space of piecewise polynomials of
degree $p$ on a shape-regular mesh of diameter~$h$ of dimension $N_h$.
The
discrete eigenvalue problem is: find $(\lambda_{k,h},u_{k,h}) \in \mathbb{R}\times V_h$ such that

\begin{equation}\label{eq:discrete-eigenproblem}
    a(u_{k,h}, v_h) = \lambda_{k,h}\, (u_{k,h}, v_h) \qquad \forall v_h \in V_h,
\end{equation}
where the bilinear form is
\begin{equation}
    a(u,v) = \int_\Omega \nabla u \cdot \nabla v\, dx.
\end{equation}
The discrete problem produces eigenvalues $0 < \lambda_{1,h} \leq \lambda_{2,h} \leq \cdots \leq
    \lambda_{N_h,h}$.

Because $V_h \subset H^1_0(\Omega)$, the Rayleigh quotient (historically due to Rayleigh
\cite{Rayleigh1894}) is minimized over a smaller space.
This
yields the basic \emph{min-max monotonicity principle}
\cite{BabuskaOsborn1991,Boffi2010}:

\begin{equation}\label{minMaxLambda}
    \lambda_k \leq \lambda_{k,h} \qquad \text{for every } k=1,\dots,N_h.
\end{equation}

The remainder of this section surveys what can be said beyond \eqref{minMaxLambda}: upper bounds,
two-sided bounds, convergence rates, and higher-order asymptotic results.

By \emph{Weyl's law} (historically due to Weyl
\cite{Weyl1912}; see also \cite{BergerGauduchonMazet1971}), the continuous
eigenvalues satisfy
\begin{equation}\label{eq:weyl}
    \lambda_k \sim C_d\, \bigl(k / |\Omega|\bigr)^{2/d} \qquad \text{as } k \to \infty.
\end{equation}
Here \[ C_d = 4\pi^2 |B_1(0)|^{-2/d}, \] where $B_1(0) \subset \mathbb{R}^d$ is the unit ball.

Define the (continuous) heat trace
\begin{equation}\label{eq:heat-trace-fem}
    Z(t) := \sum_{k=1}^\infty e^{-\lambda_k t}, \qquad t>0.
\end{equation}
For each conforming discretization, define the discrete heat trace
\begin{equation}\label{eq:heat-trace-fem-discrete}
    Z_h(t) := \sum_{k=1}^{N_h} e^{-\lambda_{k,h} t}, \qquad t>0.
\end{equation}
\begin{proposition}[Heat-trace comparison]\label{prop:heat-trace-comparison}
    For every conforming discretization and every $t>0$,
    \begin{equation}\label{eq:heat-trace-comparison}
        0<Z_h(t)<Z(t).
    \end{equation}
\end{proposition}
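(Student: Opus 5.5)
The plan is a termwise comparison based on the min-max monotonicity principle \eqref{minMaxLambda}, together with positivity of the terms that are left over. We first settle convergence. For fixed $t>0$ the series defining $Z(t)$ converges, because Weyl's law \eqref{eq:weyl} gives $\lambda_k \ge c\,k^{2/d}$ for all sufficiently large $k$ with some $c>0$. Hence $e^{-\lambda_k t}\le e^{-c t k^{2/d}}$ for large $k$, and this is summable. The discrete trace $Z_h(t)$ is a finite sum of $N_h\ge 1$ strictly positive terms, so $Z_h(t)>0$ at once. (Here we take $V_h\neq\{0\}$, so that $N_h\ge1$.)

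For the upper bound we use \eqref{minMaxLambda}: $\lambda_k\le\lambda_{k,h}$ for $k=1,\dots,N_h$. Since $x\mapsto e^{-xt}$ is decreasing for $t>0$, this gives $e^{-\lambda_{k,h}t}\le e^{-\lambda_k t}$ for each such $k$. Summing over $k$,
\begin{equation*}
  Z_h(t)=\sum_{k=1}^{N_h} e^{-\lambda_{k,h}t}\le \sum_{k=1}^{N_h} e^{-\lambda_k t}
  = Z(t)-\sum_{k=N_h+1}^{\infty} e^{-\lambda_k t}.
\end{equation*}

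Strict inequality then comes from the tail. The Dirichlet Laplacian on a bounded domain has infinitely many eigenvalues, because $H^1_0(\Omega)$ is infinite-dimensional and the solution operator is compact. The tail $\sum_{k>N_h} e^{-\lambda_k t}$ is therefore a nonempty series of strictly positive terms, and so it is strictly positive. This gives $Z_h(t)<Z(t)$.

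No step here is genuinely hard. The only points that need attention are stating the finiteness of $Z(t)$ carefully, which is where Weyl's law or the compactness of the resolvent is used, and making sure that strictness rests on the infinite tail and not on strict eigenvalue monotonicity, which can fail. Because of this, the argument does not need strict inequality in \eqref{minMaxLambda}.
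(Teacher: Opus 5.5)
Your proof is correct and follows the paper's argument: a termwise comparison via \eqref{minMaxLambda} up to index $N_h$, strictness from the strictly positive infinite tail $\sum_{k>N_h}e^{-\lambda_k t}$, and positivity because $Z_h(t)$ is a finite sum of positive terms. Your extra justifications fill in details the paper leaves implicit: convergence of $Z(t)$ via Weyl's law, the infinitude of the continuous spectrum, and the observation that strict eigenvalue monotonicity is not needed.
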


\begin{proof}
    By min--max monotonicity and finiteness of $N_h$, \[ Z_h(t)=\sum_{k=1}^{N_h}e^{-\lambda_{k,h}t} \le
        \sum_{k=1}^{N_h}e^{-\lambda_kt} < \sum_{k=1}^{\infty}e^{-\lambda_kt} =Z(t).
    \]
    The strict inequality follows from the positive tail of the continuous heat
    trace, and positivity is immediate from the definitions.
\end{proof}

\begin{proposition}[Heat-trace asymptotics]\label{prop:heat-trace-asymptotics}
    The heat trace \eqref{eq:heat-trace-fem} satisfies
    \begin{equation}\label{eq:heat-trace-small-t}
        Z(t) \sim \frac{|\Omega|}{(4\pi t)^{d/2}} \qquad \text{as } t\to 0^+,
    \end{equation}
    and
    \begin{equation}\label{eq:heat-trace-large-t}
        Z(t) \sim e^{-\lambda_1 t} \qquad \text{as } t\to \infty.
    \end{equation}
\end{proposition}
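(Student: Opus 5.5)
The two asymptotics are of different nature. I would treat the large-$t$ statement \eqref{eq:heat-trace-large-t} by an elementary argument on the spectrum. The small-$t$ statement \eqref{eq:heat-trace-small-t} I would reduce to Weyl's law \eqref{eq:weyl} through a Tauberian-type (actually Abelian) argument.

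\emph{Large $t$.} Since $\Omega$ is connected, $\lambda_1$ is simple, so $\lambda_k\ge\lambda_2>\lambda_1$ for $k\ge2$. Write
\[
e^{\lambda_1 t}Z(t)=1+\sum_{k\ge2}e^{-(\lambda_k-\lambda_1)t}.
\]
For $t\ge1$ each term satisfies
\[
e^{-(\lambda_k-\lambda_1)t}\le e^{-(\lambda_2-\lambda_1)(t-1)}e^{-(\lambda_k-\lambda_1)}.
\]
Hence the tail is bounded by $e^{-(\lambda_2-\lambda_1)(t-1)}\,e^{\lambda_1}Z(1)$. Here $Z(1)<\infty$, because Weyl's law gives $\lambda_k\gtrsim k^{2/d}$ and so the series converges. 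The tail therefore tends to $0$, which gives $Z(t)\sim e^{-\lambda_1 t}$. If $\Omega$ were disconnected, one would instead obtain $Z(t)\sim m e^{-\lambda_1 t}$ with $m$ the multiplicity. This is consistent with the paper's standing assumption of connectedness.

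\emph{Small $t$.} Let $N(\lambda)=\#\{k:\lambda_k\le\lambda\}$. Inverting \eqref{eq:weyl} gives
\[
N(\lambda)\sim c\,\lambda^{d/2}, \qquad c=|\Omega|\,|B_1(0)|/(2\pi)^d.
\]
One checks this directly from $C_d=4\pi^2|B_1|^{-2/d}$. Writing $Z$ as a Laplace–Stieltjes transform and integrating by parts,
\[
Z(t)=\int_0^\infty e^{-\lambda t}\,dN(\lambda)=t\int_0^\infty e^{-\lambda t}N(\lambda)\,d\lambda.
\]
Given $\varepsilon>0$, choose $\Lambda$ such that $|N(\lambda)-c\lambda^{d/2}|\le\varepsilon\lambda^{d/2}$ for $\lambda\ge\Lambda$. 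Split the integral at $\Lambda$:
\begin{itemize}
\item[(i)] The part over $[0,\Lambda]$ contributes $O(t)$.
\item[(ii)] The part over $[\Lambda,\infty)$ equals $c\,t\,\Gamma(d/2+1)t^{-d/2-1}(1+O(\varepsilon))$, up to an $O(t)$ correction.
\end{itemize}
Multiplying by $t^{d/2}$ and letting $t\to0$ and then $\varepsilon\to0$ yields $Z(t)\sim c\,\Gamma(d/2+1)t^{-d/2}$. Using $|B_1|=\pi^{d/2}/\Gamma(d/2+1)$, the constant is $c\,\Gamma(d/2+1)=|\Omega|(4\pi)^{-d/2}$, as claimed.

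\emph{Main obstacle.} The only real input is Weyl's law itself on Lipschitz domains, which the paper states as known in \eqref{eq:weyl}. Given that, the step needing care is the Abelian passage from $N(\lambda)$ to $Z(t)$, specifically the uniformity in the tail splitting. As a fallback that avoids relying on Weyl's law, I would use Dirichlet bracketing. Domain monotonicity of the Dirichlet heat kernel gives
\[
\sum_{Q\subset\Omega}Z_Q(t)\le Z(t)\le Z_{\mathbb{R}^d}\text{-trace over }\Omega=|\Omega|(4\pi t)^{-d/2},
\]
where the lower bound is over disjoint cubes inside $\Omega$ and the upper bound uses the free heat kernel on the diagonal. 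Explicit cube traces then give the lower bound up to $|\Omega\setminus\cup Q|$. That boundary-layer measure can be made arbitrarily small because $\partial\Omega$ has measure zero, so the two bounds together recover the same asymptotic.
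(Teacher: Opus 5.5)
Your proposal is correct. The large-$t$ part is the paper's argument. The paper bounds $\sum_{k\ge2}e^{-\lambda_k t}\le e^{-\lambda_2(t-1)}\sum_{k\ge2}e^{-\lambda_k}$ for $t\ge1$ and uses Weyl's law for convergence, which is what you do after factoring out $e^{-\lambda_1 t}$. Your remark that this needs $\lambda_1$ to be simple, hence connectedness, sharpens the paper's comment that disconnected domains are handled componentwise, since there one only gets $Z(t)\sim m\,e^{-\lambda_1 t}$.

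The real difference is the small-$t$ part. The paper gives no argument and cites the classical heat-kernel asymptotic (Berger--Gauduchon--Mazet, Kac). You derive it in one of two ways:
\begin{itemize}
\item from the Weyl law \eqref{eq:weyl}, which the paper already assumes, via the elementary Abelian direction (integration by parts to $Z(t)=t\int_0^\infty e^{-\lambda t}N(\lambda)\,d\lambda$, an $\varepsilon$-splitting, limits taken as $t\to0$ then $\varepsilon\to0$);
\item by Dirichlet domain monotonicity, sandwiching $t^{d/2}Z(t)$ between explicit cube traces and $|\Omega|(4\pi)^{-d/2}$.
\end{itemize}
The Abelian route is not circular here because the paper states Weyl's law independently; it shows that the two classical facts quoted in Section~\ref{sec:fem-background} are consistent with each other.

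The bracketing route uses no spectral asymptotics at all. It also yields the uniform bound $Z(t)\le|\Omega|(4\pi t)^{-d/2}$ for every $t>0$ and works for any bounded open set. Your appeal to $|\partial\Omega|=0$ is convenient for a uniform grid but not needed: openness of $\Omega$ alone lets finitely many disjoint dyadic cubes exhaust $|\Omega|$ up to any $\varepsilon$.
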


\begin{proof}
    The small-time asymptotic \eqref{eq:heat-trace-small-t} is the classical Weyl/heat-kernel
    asymptotic (see \cite{BergerGauduchonMazet1971,Kac1966}).

    For large time, split \[ Z(t)=e^{-\lambda_1 t}+\sum_{k=2}^\infty e^{-\lambda_k t}.
    \]
    For $t\ge 1$,
    \[
        \sum_{k=2}^\infty e^{-\lambda_k t}
        =\sum_{k=2}^\infty e^{-\lambda_k(t-1)}e^{-\lambda_k}
        \le e^{-\lambda_2(t-1)}\sum_{k=2}^\infty e^{-\lambda_k}.
    \]
    By Weyl's law, the series $\sum_{k=2}^\infty e^{-\lambda_k}$
    converges.
    Hence \[ \frac{\sum_{k=2}^\infty e^{-\lambda_k t}}{e^{-\lambda_1 t}} \le
        C\,e^{-(\lambda_2-\lambda_1)t}\to 0, \] so \eqref{eq:heat-trace-large-t} follows.
\end{proof}

We also introduce the associated spectral measures on $(0,\infty)$:
\begin{equation}\label{eq:spectral-measures}
    \mu = \sum_{k=1}^\infty \delta_{\lambda_k}, \qquad
    \mu_h = \sum_{k=1}^{N_h} \delta_{\lambda_{k,h}},
\end{equation}
Then the heat traces are exactly the Laplace transforms of these measures: \[ Z(t)=\int_0^\infty
    e^{-\lambda t}\,d\mu(\lambda), \qquad Z_h(t)=\int_0^\infty e^{-\lambda t}\,d\mu_h(\lambda).
\]

The goal is to identify a Hilbert space $H$ of test functions on $(0,\infty)$
and sufficient conditions under which the discrete spectral measure is bounded
in the dual norm:

\begin{equation}\label{eq:star}
    \|\mu_h\|_{H'} \leq \|\mu\|_{H'}.
\end{equation}
Here $H'$ denotes the topological dual of $H$, with the $H'$-norm given by the operator norm of the
evaluation or integration functional.

For general background on spectral measures and self-adjoint spectral theory, see
\cite{ReedSimon1972,Weidmann1980}.

%% file: weightedTransformSpaces.tex
\section{Weighted Transform Spaces and Their Duals}\label{sec:weighted-transform-spaces}

This section develops the abstract framework behind the Laplace-space construction in
Section~\ref{sec:LaplaceSpaces}.
The goal is to isolate the mechanism that turns a weighted transform-side pairing into a
reproducing kernel Hilbert space (RKHS) and, in turn, into a dual norm.
The Laplace transform is the main example of interest, and the next section shows how it yields the
heat-trace norms used for spectral comparison.
For a brief review of RKHSs and the Green's-function interpretation of the weighted Laplace
kernels, see Appendix~\ref{app:rkhs-greens-function}.

We begin with signed Borel measures and a weighted transform-side pairing, and then identify the
associated RKHS structure.
All spaces are over the real numbers.
Let $\mathcal{M}$ be a linear space of signed Borel measures on $(0,\infty)$ containing the finite
linear combinations of Dirac masses.
Consider an injective integral transform
\begin{equation}
  \label{eq:AbstractT}
  (T\mu)(t) = \int_0^\infty T(t,s)\, d\mu(s).
\end{equation}
We assume that $T\mu$ is defined and continuous for every $\mu\in\mathcal{M}$ and that the kernel
$(t,\lambda)\mapsto T(t,\lambda)$ is measurable.
Let $w$ be measurable and positive almost
everywhere, and define
\begin{equation}\label{eq:weighted-feature-map}
  \Phi(\lambda)(t) = \sqrt{w(t)}\,T(t,\lambda).
\end{equation}
Assume that $\Phi(\lambda)\in L^2(0,\infty)$ for every $\lambda>0$.
Let $\mathcal{M}_w^0\subset\mathcal{M}$ be the space of measures for which $\Phi$ is Bochner
integrable with respect to $\mu$, and define
\begin{equation}\label{eq:feature-integral}
  A\mu = \int_0^\infty \Phi(\lambda)\,d\mu(\lambda) = \sqrt{w}\,T\mu \in L^2(0,\infty).
\end{equation}
The map $A$ is injective: if $A\mu=0$, then continuity of $T\mu$, positivity of $w$ almost
everywhere, and injectivity of $T$ successively give $T\mu=0$ and $\mu=0$.
We equip
$\mathcal{M}_w^0$ with
\begin{equation}\label{eq:dual-transform-inner-product}
  \langle \mu,\nu\rangle_{H_w'}
  =\langle T\mu,T\nu\rangle_{L^2(w)}
  :=\int_0^\infty (T\mu)(t)(T\nu)(t)w(t)\,dt
  =\langle A\mu,A\nu\rangle_{L^2},
\end{equation}
and define $H_w'$ as its Hilbert-space completion.
Define the synthesis transform
\begin{equation}\label{eq:weighted-synthesis-transform}
  (Sh)(\lambda)=\int_0^\infty T(t,\lambda)h(t)\,dt,
\end{equation}
and assume that it is well defined and injective on $L^2(w^{-1})$. Under the integrability
conditions that justify Fubini's theorem,
\begin{equation}\label{eq:formal-transpose-identity}
  \int_0^\infty (T\mu)(t)h(t)\,dt
  =\int_0^\infty (Sh)(\lambda)\,d\mu(\lambda).
\end{equation}
Thus $S$ is the formal transpose of $T$.
It is not denoted by $T^*$ because $\mathcal{M}$ has not been equipped with a Hilbert-space inner
product: the right-hand side of \eqref{eq:formal-transpose-identity} is the measure--function dual
pairing, not a Hilbert-space inner product on $\mathcal{M}$.

\begin{proposition}\label{prop:weighted-transform-rkhs}
  The kernel
  \begin{equation}\label{eq:ReproducingKernel}
    K(\lambda,\eta)=\langle\Phi(\lambda),\Phi(\eta)\rangle_{L^2}
    =\int_0^\infty T(t,\lambda)T(t,\eta)w(t)\,dt
  \end{equation}
  is strictly positive definite and determines an RKHS $H_w$. Moreover,
  \begin{equation}\label{eq:primal-inverse-transform-inner-product}
    \langle f,g\rangle_{H_w}
    =\langle S^{-1}f,S^{-1}g\rangle_{L^2(w^{-1})}.
  \end{equation}
  Moreover, the natural pairing
  \begin{equation}\label{eq:pairing}
    \langle\mu,f\rangle=\int_0^\infty f(\lambda)\,d\mu(\lambda)
  \end{equation}
  identifies $H_w'$ isometrically with the continuous dual of $H_w$.
\end{proposition}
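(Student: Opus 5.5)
We follow the standard feature-map construction of an RKHS: $H_w$ is the range of the adjoint of the feature map, with the quotient norm.

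\textbf{Step 1: the kernel.} For $\lambda_1,\dots,\lambda_n$ distinct and $c\in\mathbb{R}^n$,
\[
\sum_{i,j}c_ic_jK(\lambda_i,\lambda_j)=\Bigl\|\sum_i c_i\Phi(\lambda_i)\Bigr\|_{L^2}^2=\|A\mu\|_{L^2}^2,
\qquad \mu=\sum_i c_i\delta_{\lambda_i}.
\]
This is nonnegative. It vanishes only if $A\mu=0$, and then $\mu=0$ by the injectivity of $A$ noted before the proposition; hence $c=0$. So $K$ is strictly positive definite, and Moore--Aronszajn gives a unique RKHS $H_w$ with kernel $K$.

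\textbf{Step 2: realize $H_w$ concretely.} Define $V\colon L^2(0,\infty)\to\mathbb{R}^{(0,\infty)}$ by
\[
(Vg)(\lambda)=\langle g,\Phi(\lambda)\rangle_{L^2}.
\]
The standard feature-space theorem states that $\operatorname{ran}V$, normed by $\|f\|=\min\{\|g\|_{L^2}: Vg=f\}$, is the RKHS of $K$. The minimizer is the unique $g$ in $\overline{\operatorname{span}}\{\Phi(\lambda)\}$, i.e.\ in $(\ker V)^\perp$.

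Now substitute $g=h/\sqrt w$. Then $\|g\|_{L^2}=\|h\|_{L^2(w^{-1})}$ and
\[
Vg(\lambda)=\int_0^\infty T(t,\lambda)h(t)\,dt=Sh(\lambda).
\]
Because $S$ is assumed injective on $L^2(w^{-1})$, we have $\ker V=\{0\}$. The minimum is therefore attained uniquely at $h=S^{-1}f$, which gives \eqref{eq:primal-inverse-transform-inner-product} after polarization.

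We expect the main obstacle here. We must check that injectivity of $S$ is exactly what removes the quotient, so that $S^{-1}$ is well defined on all of $H_w$. We must also check that $Vg$ is well defined pointwise, which holds by Cauchy--Schwarz since $\Phi(\lambda)\in L^2$.

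\textbf{Step 3: duality.} For $\mu\in\mathcal{M}_w^0$ and $f=Vg\in H_w$, Bochner integrability of $\Phi$ lets us move the bounded functional $\langle g,\cdot\rangle$ inside the integral:
\[
\int f\,d\mu=\Bigl\langle g,\int\Phi\,d\mu\Bigr\rangle=\langle g,A\mu\rangle.
\]
Hence $|\langle\mu,f\rangle|\le\|A\mu\|\,\|f\|_{H_w}$.

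For the reverse inequality, take $f_\mu:=VA\mu\in H_w$. Its norm is $\|A\mu\|$, since $A\mu$ lies in the closed span of the features, and $\langle\mu,f_\mu\rangle=\|A\mu\|^2$. Thus $\mu\mapsto\langle\mu,\cdot\rangle$ is an isometry from $(\mathcal{M}_w^0,\|\cdot\|_{H_w'})$ into $H_w^*$. It extends isometrically to the completion $H_w'$.

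For surjectivity, note that the image contains every evaluation functional $\delta_\lambda=K(\cdot,\lambda)$ under the Riesz identification. These span a dense subspace of $H_w^*$ because $H_w$ is an RKHS. The range of an isometry from a complete space is closed, so the image is all of $H_w^*$.
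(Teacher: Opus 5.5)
Your proposal is correct and follows the paper's proof essentially step by step. It uses the same Gram-norm argument via injectivity of $A$ for strict positive definiteness, and the same feature-map realization $g\mapsto\langle g,\Phi(\cdot)\rangle_{L^2}$, in which the substitution $h=\sqrt{w}\,g$ produces $S$ and injectivity of $S$ forces $\overline{\operatorname{span}}\{\Phi(\lambda)\}=L^2(0,\infty)$. It also uses the same representer $V(A\mu)$ of norm $\|A\mu\|_{L^2}$ for the pairing, with the only addition being that you spell out surjectivity onto the full dual (closed range of an isometry plus density of evaluation functionals), which the paper compresses into one sentence about completion.
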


\begin{proof}
  For distinct $\lambda_i$ and real $c_i$, \[ \sum_{i,j}c_ic_jK(\lambda_i,\lambda_j) =\left\|\sum_i
    c_i\Phi(\lambda_i)\right\|_{L^2}^2.
  \]
  Equality means $A(\sum_i c_i\delta_{\lambda_i})=0$, so injectivity of $A$ gives $c_i=0$ for
  every $i$.
  The Moore--Aronszajn theorem \cite{Aronszajn1950} therefore yields $H_w$.

  Let $G=\overline{\operatorname{span}}\{\Phi(\lambda):\lambda>0\}$ and define $Ug(\lambda)=\langle
    g,\Phi(\lambda)\rangle_{L^2}$.
  The map $U:G\to H_w$ is an isometry onto $H_w$.
  If $P_G$ denotes the orthogonal projection onto $G$, then $Sh=U(P_G(h/\sqrt w))$.
  Injectivity of $S$ implies $G=L^2(0,\infty)$, so $S^{-1}f=\sqrt w\,U^{-1}f$ and the asserted
  inner-product identity follows.

  For $\mu\in\mathcal{M}_w^0$, the Bochner integral $A\mu$ lies in $G$.
  If $f=Ug$, then \[ \int_0^\infty f(\lambda)\,d\mu(\lambda) =\langle g,A\mu\rangle_{L^2} =\langle
    f,U(A\mu)\rangle_{H_w}.
  \]
  Thus the pairing is bounded and its representing element has norm $\|A\mu\|_{L^2}$.
  For finite Dirac combinations these representers span the kernel sections, which are dense in
  $H_w$.
  Completion therefore identifies $H_w'$ isometrically with the full continuous dual of $H_w$.
\end{proof}

%% file: weightedLaplaceSpaces.tex
\section{Weighted Laplace Spaces for Spectral Measures}\label{sec:LaplaceSpaces}
In this section we introduce the weighted Laplace spaces $H_w$ associated with spectral measures, a
natural setting for problems in which a spectral measure is compared through its heat trace.
The preceding section gave the abstract weighted-transform framework; here we instantiate it for
the Laplace transform, which is the concrete operator relevant for our spectral comparison.
The resulting theory combines classical Bernstein--Stieltjes ideas on completely monotone functions
with modern reproducing kernel Hilbert space techniques (see
\cite{Widder1941,BergForst1975,Aronszajn1950}).
The connection to heat-trace asymptotics and spectral geometry is likewise classical (see
\cite{BergerGauduchonMazet1971,Kac1966}).
The specific formulation as a unified framework for spectral-measure comparison via weighted dual
norms is a synthesis of these classical elements adapted to the present problem.

\subsection{Laplace Measure Spaces}
We now turn from the abstract construction to the spectral application.
We specialize to the case where the operator $T$ is the Laplace transform, i.e., \[
  \mathcal{L}[\mu](t)=\int_0^\infty e^{-\lambda t}\,d\mu(\lambda).
\]
In this case the reproducing kernel becomes
\[
  K(\lambda,\eta)=\int_0^\infty e^{-(\lambda+\eta)t}\,w(t)\,dt
  =\mathcal{L}[w](\lambda+\eta),
\]
which is exactly the Laplace transform of the weight $w$ evaluated at $\lambda+\eta$.

Some classical weight functions and their resulting reproducing kernels are listed below.
\begin{table}[ht!]
  \centering
  \caption{Reproducing Kernels and Spaces for Given Weight Functions}
  \label{tab:kernels_spaces_experimantal}
  \begin{tabular}{|l|l|l|}
    \hline
    \textbf{Weight $w(t)$}    & \textbf{Kernel $K(\lambda,\eta)$}                & \textbf{Space $H_w$} \\ \hline
    $e^{-\alpha t}$           & $\dfrac{1}{\lambda+\eta+\alpha}$                 & Lorentz/Hardy–$H^2$  \\ \hline
    $t^{\nu-1} e^{-\alpha t}$ & $\dfrac{\Gamma(\nu)}{(\lambda+\eta+\alpha)^\nu}$ & Matérn-$\nu$         \\ \hline
    $e^{-t^2/(4\sigma^2)}$    & $e^{-\sigma^2(\lambda+\eta)^2}$ (Gaussian)       & Paley–Wiener         \\ \hline
  \end{tabular}
\end{table}

The names in the third column identify the classical RKHS family generated by the Laplace-transform
profile of each weight.
For $w(t)=e^{-\alpha t}$, one obtains the Cauchy-type kernel
$K(\lambda,\eta)=1/(\lambda+\eta+\alpha)$, corresponding to a Hardy/Lorentz half-plane structure.
For $w(t)=t^{\nu-1}e^{-\alpha t}$, the kernel
$K(\lambda,\eta)=\Gamma(\nu)/(\lambda+\eta+\alpha)^\nu$ has the fractional resolvent form
associated with Mat\'ern-type spaces.
For Gaussian-type weights, the resulting kernel is Gaussian in $(\lambda+\eta)$, which places the
corresponding RKHS in the Paley--Wiener/Gaussian-analytic class.

\subsection{The Laplace Spectral Measure Spaces}
This is the main result of the paper:
\begin{theorem}\label{thm:main-dual-norm}
  Let $w\colon(0,\infty) \to [0,\infty)$ be a measurable weight that is positive almost everywhere,
  and let $H_w'$ be the weighted Laplace space for that weight.
  Assume that the weight is such that $\mu, \mu_h \in H'_w$ and that the discrete and continuous
  eigenvalues fulfill the Rayleigh--Ritz monotonicity principle~\eqref{minMaxLambda}.
  Then
  \begin{equation}\label{eq:dual-norm-ineq}
    \|\mu_h\|_{H'_w} \leq \|\mu\|_{H'_w}.
  \end{equation}
\end{theorem}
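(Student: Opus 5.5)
The plan is to reduce the dual-norm inequality to a pointwise comparison of heat traces, using the weighted transform-side inner product \eqref{eq:dual-transform-inner-product} specialized to $T=\mathcal{L}$.

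\emph{Step 1: norm identity.} For a measure $\nu\in\mathcal{M}_w^0$ we have $\|\nu\|_{H_w'}^2=\int_0^\infty (\mathcal{L}\nu)(t)^2w(t)\,dt$. For $\mu_h$ this is immediate, since $\mu_h$ is a finite sum of Dirac masses and $\mathcal{L}\mu_h=Z_h$. This gives
\[
  \|\mu_h\|_{H_w'}^2=\int_0^\infty Z_h(t)^2w(t)\,dt .
\]
For $\mu$, which is an infinite sum, I will approximate by the truncations $\mu^{(N)}=\sum_{k\le N}\delta_{\lambda_k}$. Their Laplace transforms $Z^{(N)}(t)=\sum_{k\le N}e^{-\lambda_kt}$ increase monotonically to $Z(t)$. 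The hypothesis $\mu\in H_w'$ is interpreted as saying that $\mu$ is the $H_w'$-limit of these truncations, equivalently that $\int Z^2w<\infty$. Monotone convergence then gives $\|A\mu^{(N)}-A\mu\|_{L^2}^2=\int (Z-Z^{(N)})^2w\to0$, by dominated convergence with dominating function $Z^2w$. Hence
\[
  \|\mu\|_{H_w'}^2=\int_0^\infty Z(t)^2w(t)\,dt .
\]
Alternatively, I would work through the pairing of Proposition~\ref{prop:weighted-transform-rkhs}: for $f=Ug$ one has $\langle\mu,f\rangle=\langle g,\sqrt w\,Z\rangle_{L^2}$ by Fubini with nonnegative integrands. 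This identifies the representer and yields the same identity.

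\emph{Step 2: pointwise comparison.} By Proposition~\ref{prop:heat-trace-comparison}, which uses only the assumed monotonicity \eqref{minMaxLambda} and the finiteness of $N_h$, we have $0<Z_h(t)<Z(t)$ for all $t>0$. Since both functions are positive, squaring preserves the inequality: $Z_h(t)^2\le Z(t)^2$. Multiplying by $w\ge0$ and integrating gives $\int Z_h^2w\le\int Z^2w$. Taking square roots and applying Step 1 yields \eqref{eq:dual-norm-ineq}.

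\emph{Main obstacle.} The delicate point is the identification of the norm of the infinite measure $\mu$ in Step 1. The space $H_w'$ is defined as a completion, so one must check that the element ``$\mu$'' in the hypothesis agrees with the limit of truncations, and that its norm is the weighted heat-trace integral rather than some abstract limit. I would handle this with the monotonicity of $Z^{(N)}\uparrow Z$ and dominated convergence as sketched above. Positivity of all terms is what makes Fubini and the limits unproblematic. Everything else reduces directly to the heat-trace comparison.
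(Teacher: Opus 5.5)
Your proposal is correct and follows the paper's proof. Both take the pointwise comparison $0<Z_h(t)<Z(t)$ from Proposition~\ref{prop:heat-trace-comparison}, square it, multiply by $w\ge 0$, and integrate, using the heat-trace representation $\|\nu\|_{H_w'}^2=\int_0^\infty(\mathcal{L}\nu)(t)^2w(t)\,dt$. Your Step~1 (truncations $\mu^{(N)}$ plus dominated convergence with majorant $Z^2w$) makes explicit for the infinite measure $\mu$ a point the paper takes for granted; in your alternative pairing route, note that $g$ may change sign, so use Tonelli on $|g|\sqrt{w}\,e^{-\lambda_k t}$ with Cauchy--Schwarz rather than ``nonnegative integrands''.
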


\begin{proof}
  By Proposition~\ref{prop:heat-trace-comparison}, $0<Z_h(t)<Z(t)$ for every $t>0$.
  Since $w(t)\geq 0$,
  \begin{align*}
    \|\mu_h\|_{H'_w}^2
     & = \int_0^\infty Z_h(t)^2\, w(t)\, dt
    \leq \int_0^\infty Z(t)^2\, w(t)\, dt
    = \|\mu\|_{H'_w}^2.
  \end{align*}
  Taking square roots gives \eqref{eq:dual-norm-ineq}.
\end{proof}

\subsubsection{Sharpness and Gap Dependence}
By Proposition~\ref{prop:heat-trace-comparison}, the strict inequality $Z_h(t)<Z(t)$ holds for
every $t>0$.
Consequently, whenever $w$ is positive on a set of positive measure, \[ \|\mu_h\|_{H'_w}^2 =
  \int_0^\infty Z_h(t)^2 w(t)\,dt < \int_0^\infty Z(t)^2 w(t)\,dt = \|\mu\|_{H'_w}^2, \] so equality
in \eqref{eq:dual-norm-ineq} is impossible for any fixed mesh.

This strictness for fixed $h$ does not contradict sharpness of the constant $1$ in
\eqref{eq:dual-norm-ineq}.
Along any convergent refinement sequence, one still has $N_h<\infty$ for every $h$, but $Z_h(t)\to
  Z(t)$ for each $t>0$ and $0\le Z_h(t)\le Z(t)$.
Since $Z(t)^2w(t)$ is integrable whenever $\mu\in H'_w$, dominated convergence gives \[
  \|\mu_h\|_{H'_w}^2 \to \|\mu\|_{H'_w}^2 \qquad \text{as } h\to 0.
\]
Therefore no uniform constant $c<1$ can replace $1$ in
\eqref{eq:dual-norm-ineq}.

When the FEM spectrum approximates the continuous spectrum with the usual rate
$\lambda_{k,h}-\lambda_k = O(h^{2p})$, the gap in the squared norms is also controlled by the same
rate.

Indeed,

\[ \|\mu\|_{H'_w}^2 - \|\mu_h\|_{H'_w}^2 = \int_0^\infty
  \bigl(Z(t)+Z_h(t)\bigr)\bigl(Z(t)-Z_h(t)\bigr) w(t)\, dt.
\]

With $0 \le Z_h(t) \le Z(t)$ and the elementary bound $e^{-\lambda_k t} - e^{-\lambda_{k,h} t} \leq
  t e^{-\lambda_k t} (\lambda_{k,h}-\lambda_k)$, one obtains a bound of the form $O(h^{2p})$ for the
gap, provided the weight $w$ is compatible with the heat trace.
In particular, as the mesh is refined and the eigenvalue separation shrinks, the inequality becomes
increasingly tight, although it remains strict for every fixed $h$.

\subsection{Choosing weights}\label{subsec:choosing-weights}
We now choose the weight $w$ so that the spectral measure introduced in
eq.~\eqref{eq:spectral-measures} above belongs to $H_w'$.
The behavior of $w(t)$ near $t=0$ is tied to the high-frequency eigenvalue distribution through the
Weyl-scale singularity of $Z(t)$, whereas its behavior as $t\to\infty$ is tied to the low-lying
spectrum, in particular the decay governed by $\lambda_1$.
A natural choice is the Matérn-type weight from Table~\ref{tab:kernels_spaces_experimantal},
with a slightly different parametrization:
\begin{equation}\label{eq:spectral-weight}
  w(t) = t^{\alpha} e^{-\delta t},\qquad \alpha>-1, \quad \delta\geq 0.
\end{equation}
The following proposition follows from Proposition~\ref{prop:heat-trace-asymptotics} and the
definition of the weight.
\begin{proposition}\label{prop:dual-membership-weight}
  The spectral measure $\mu$ defined in \eqref{eq:spectral-measures} belongs to the dual space $H'_w$
  if and only if $\alpha>d-1$.
\end{proposition}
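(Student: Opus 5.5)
The plan is to reduce membership of $\mu$ in $H'_w$ to finiteness of the weighted heat-trace energy. By the construction in Section~\ref{sec:weighted-transform-spaces}, $\mu\in H'_w$ exactly when $\int_0^\infty Z(t)^2 w(t)\,dt<\infty$. With $w(t)=t^\alpha e^{-\delta t}$ this integral is $\int_0^\infty Z(t)^2 t^\alpha e^{-\delta t}\,dt$.

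One point needs care first. $\mu$ is an infinite measure, so it does not lie in $\mathcal{M}_w^0$ a priori. I would therefore take $\mu$ as the limit in $H'_w$ of the truncations $\mu^{(N)}=\sum_{k\le N}\delta_{\lambda_k}$. Their norms are $\int Z_N^2 w\,dt$, where $Z_N(t)=\sum_{k\le N}e^{-\lambda_k t}$ increases monotonically to $Z$. For $N>M$,
\[
\|\mu^{(N)}-\mu^{(M)}\|_{H'_w}^2=\int (Z_N-Z_M)^2 w\,dt .
\]
If $\int Z^2w<\infty$, dominated convergence makes this sequence Cauchy, and the limit has norm squared $\int Z^2 w$. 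Conversely, if $\int Z^2 w=\infty$, monotone convergence sends $\|\mu^{(N)}\|\to\infty$. In that case the pairing $f\mapsto\int f\,d\mu$ cannot be bounded: testing against $f=\sum_{k\le N}K(\cdot,\lambda_k)$ gives $\langle \mu,f\rangle\ge\|f\|^2$ by positivity of the kernel. So membership is equivalent to finiteness of the integral.

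Next I would split the integral at $t=1$. On $[1,\infty)$, Proposition~\ref{prop:heat-trace-asymptotics} gives $Z(t)\le Ce^{-\lambda_1 t}$. The integrand is then bounded by $Ct^\alpha e^{-(2\lambda_1+\delta)t}$, which is integrable for every $\alpha$ and every $\delta\ge0$ because $\lambda_1>0$. So the large-time part never obstructs membership.

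On $(0,1]$, the small-time asymptotic \eqref{eq:heat-trace-small-t} gives constants $c,C>0$ and $t_0$ such that
\[
c\,t^{-d/2}\le Z(t)\le C\,t^{-d/2} \quad \text{on } (0,t_0].
\]
On $[t_0,1]$, $Z$ is continuous and bounded, so that piece is harmless. Since $e^{-\delta t}\asymp 1$ on $(0,1]$, the integrand is comparable to $t^{\alpha-d}$. This is integrable near $0$ if and only if $\alpha-d>-1$, that is, $\alpha>d-1$, and the limit comparison test gives both directions. Note that $\alpha>d-1$ already implies $\alpha>-1$, so it is consistent with \eqref{eq:spectral-weight}.

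The main obstacle is the justification step identifying membership of the infinite measure $\mu$ with finiteness of $\int Z^2w$, since the abstract framework only treats Bochner-integrable measures. I expect the truncation and monotone-convergence argument above to handle it. The exponent comparison itself is routine once Proposition~\ref{prop:heat-trace-asymptotics} is invoked.
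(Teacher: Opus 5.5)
Your proposal is correct and follows the paper's route: the paper obtains the proposition directly from the heat-trace form of the dual norm and the asymptotics of Proposition~\ref{prop:heat-trace-asymptotics}. In that argument the $t^{-d/2}$ singularity at $t=0$ forces $\alpha>d-1$, and the $e^{-\lambda_1 t}$ decay makes the large-time part harmless; your truncation argument, which identifies membership of the infinite measure $\mu$ in $H'_w$ with finiteness of $\int_0^\infty Z(t)^2w(t)\,dt$, correctly justifies a step the paper leaves implicit.
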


\subsection{Eigenvalue-Free Evaluation of \texorpdfstring{$\|\mu_h\|_{H'_w}$}{the discrete dual norm}}
\label{subsec:eigfree-evaluation}
For a conforming FEM discretization, let $K_h$ and $M_h$ denote the stiffness and mass matrices in
a chosen finite element basis.
The discrete eigenpairs are defined by the generalized eigenvalue problem
\begin{equation}
  K_h q_{k,h} = \lambda_{k,h} M_h q_{k,h},
  \qquad k=1,\ldots,N_h,
\end{equation}
where $M_h$ is symmetric positive definite.
Equivalently, $$ B_h = M_h^{-1/2} K_h M_h^{-1/2} $$ is a symmetric matrix with eigenvalues
\{$\lambda_{k,h}\}$, and $$ Z_h(t) = \tr(e^{-t B_h}).
$$
Hence
$$
  \|\mu_h\|_{H'_w}^2 = \int_0^\infty \tr(e^{-t B_h})^2\, w(t)\, dt.
$$

A convenient eigenvalue-free strategy is to approximate the scalar heat kernel by a rational
function on the positive real axis (see \cite{BeylkinMonzon2010}): $$ e^{-t x} \approx \sum_{j=1}^n
  w_j(t)\, \frac{1}{x + \sigma_j(t)}, $$ with positive weights $w_j(t)$ and shifts $\sigma_j(t) > 0$.
Then $$ \tr(e^{-t B_h}) \approx \sum_{j=1}^n w_j(t)\, \tr((B_h + \sigma_j(t) I)^{-1}).
$$
Because
$$
  \tr((B_h + \sigma I)^{-1}) = \tr((K_h + \sigma M_h)^{-1}
  M_h), $$ this computation can be carried out using only shifted linear solves with $K_h + \sigma
  M_h$ and mass-matrix products.
The remaining time integral is approximated by a suitable quadrature rule for the weight $w$.

A practical matrix-based strategy is:
\begin{enumerate}
  \item choose quadrature nodes $\{t_i\}$ and weights $\{\alpha_i\}$ for the outer integral
        \[
          \int_0^\infty Z_h(t)^2 w(t)\,dt;
        \]
  \item for each node $t_i$, approximate the heat trace by a rational expansion of
        $e^{-t_i x}$ and define
        $$
          \widehat Z_h(t_i) = \sum_{j=1}^n w_j(t_i)\, \widehat\tau_j(t_i),
          \qquad
          \widehat\tau_j(t_i) \approx \tr\bigl((K_h + \sigma_j(t_i) M_h)^{-1}
          M_h\bigr); $$ \item estimate each trace term either by direct trace evaluation for small systems or
        by randomized trace estimation for large FEM systems; \item assemble the final approximation $$
          \|\mu_h\|_{H'_w}^2 \approx \sum_i \alpha_i\, [\widehat Z_h(t_i)]^2 w(t_i).
        $$
\end{enumerate}

In practice, multi-shift linear solvers and a small number of random probes can make the method
efficient when $N$ is large.
A reasonable heuristic is that the method is most attractive when the FEM size is too large for
full eigenvalue computation but the number of required quadrature and shift terms remains moderate.
The approximation error depends on the rational approximation, the quadrature rule, and the trace
estimation accuracy; the latter is discussed in Appendix~\ref{app:matrix-dual-norm}.


\subsection{Connection to Weighted Sobolev Spaces}

\subsubsection{Sobolev Characterisation of \texorpdfstring{$H_w$}{the weighted Laplace space}}

For the weight $w(t) = t^{2s-1} e^{-\alpha t}$ (with $s > 0$, $\alpha > 0$), the reproducing kernel
$(1)$ becomes

$$ K(\lambda, \eta) = \frac{\Gamma(2s)}{(\lambda + \eta +
    \alpha)^{2s}}, $$

which is the Green's function of the operator

$$ L = \left(\alpha - \frac{d}{d\lambda}\right)^{2s} $$

on
$(0, \infty)$ — a weighted Sobolev operator of order $2s$ with exponential weight
$e^{-\alpha\lambda}$.
A short RKHS/Green's-function derivation is provided in Appendix~\ref{app:rkhs-greens-function}.

The space $H_w$ is therefore a \emph{weighted Sobolev space of order $s$ on $(0,\infty)$} with
exponential weight, and $H'_w$ is its dual.
This example illustrates the connection between Laplace-transform RKHSs and classical fractional
Sobolev spaces.
Point evaluations are continuous on $H_w$ when $s > 1/2$ (the 1D Sobolev embedding threshold).

\subsubsection{The Inner Product on \texorpdfstring{$H_w$}{the weighted Laplace space}}

For the Laplace kernel, the synthesis operator $S$ in \eqref{eq:weighted-synthesis-transform} is
the Laplace transform $\mathcal{L}$.
On the admissible locally integrable weighted class considered here, uniqueness of the Laplace
transform makes $S$ injective.
Thus \eqref{eq:primal-inverse-transform-inner-product} identifies the unique pre-image of $f\in
  H_w$ as $\mathcal{L}^{-1}f$.
Equivalently, $H_w$ can be described by the inner product on the spectral/Laplace-transform side:

$$ \langle f, g \rangle_{H_w} = \int_0^\infty \hat{f}(t)\, \hat{g}(t)\,
  \frac{dt}{w(t)}, $$

where $\hat{f} = \mathcal{L}^{-1} f$ (the inverse Laplace
transform).
This is a \emph{Sobolev-type norm defined via spectral multipliers}, exactly analogous to the
characterisation $\|u\|_{H^s} \sim \|(1+|\xi|^2)^{s/2} \hat{u}\|_{L^2}$ in the Fourier setting,
with the Laplace transform replacing the Fourier transform.


\subsection{Interpretation and Remarks}

\subsubsection{Physical Interpretation}

The $H'_w$ norm of $\mu$ measures the \emph{$L^2_w$-energy of the heat trace}: the spectral measure
$\mu_h$ of the discrete problem always has *less* heat trace energy than the continuous one,
because the discrete eigenvalues are shifted upward and the system "cools faster" in the discrete
approximation.

\subsubsection{Relation to the Spectral Approximation Error}

Combined with the convergence result $\lambda_{k,h} - \lambda_k = O(h^{2p})$, one obtains a
quantitative version:

$$ \|\mu\|_{H'_w}^2 - \|\mu_h\|_{H'_w}^2 = \int_0^\infty
  \bigl(Z(t)^2 - Z_h(t)^2\bigr)\, w(t)\, dt \geq 0, $$

and asymptotically (using
$Z(t) - Z_h(t) = \sum_k (e^{-\lambda_k t} - e^{-\lambda_{k,h} t}) \leq t \sum_k (\lambda_{k,h} -
  \lambda_k) e^{-\lambda_k t}$):

$$ \|\mu\|_{H'_w}^2 - \|\mu_h\|_{H'_w}^2 =
  O(h^{2p}) \quad \text{as } h \to 0.
$$

\subsubsection{The Inequality Cannot Be Reversed in General}

The inequality $\|\mu_h\|_{H'_w} \leq \|\mu\|_{H'_w}$ is one-directional and relies on the complete
monotonicity of $K$ together with positivity of $w$.
If $w$ changed sign or if a different function space were used, the inequality could fail.

\subsection{Summary}
The key points from this section are:
\begin{enumerate}
  \item The weighted Laplace space $H_w$ is a weighted RKHS of Laplace transforms.
  \item The dual norm of a spectral measure satisfies
        $\|\mu\|_{H'_w}^2 = \displaystyle\int_0^\infty Z(t)^2\, w(t)\, dt$.
  \item For conforming FEM eigenvalues, $Z_h(t) \leq Z(t)$ for all $t > 0$, which implies
        $\|\mu_h\|_{H'_w} \leq \|\mu\|_{H'_w}$.
  \item The finiteness condition $\int_0^\infty Z(t)^2 w(t)\, dt < \infty$ is the precise
        requirement for $\mu \in H'_w$; in particular, for $w(t)=t^\alpha e^{-\delta t}$
        this holds when $\alpha > d-1$.
  \item For every fixed conforming discretization with $N_h<\infty$, one has the
        strict inequality $\|\mu_h\|_{H'_w} < \|\mu\|_{H'_w}$ for any weight $w$
        that is positive on a set of positive measure; nevertheless, the inequality is
        sharp under mesh refinement.
\end{enumerate}

The result \eqref{eq:dual-norm-ineq} holds for \emph{any non-negative weight $w$ such that $\mu,
    \mu_h \in H'_w$}, with no further assumptions on the domain or eigenfunctions beyond FEM eigenvalue
monotonicity.

\bigskip

\nocite{BabuskaOsborn1991,BergForst1975,Aronszajn1950,Widder1941,BergerGauduchonMazet1971,Kac1966}

%% file: rational-approximation.tex
\section{Rational Approximation of Shifted Symbols in \texorpdfstring{$H_w$}{the weighted Laplace space}: Optimal and Classical Constructions}\label{sec:rational-approx}
This section studies rational approximation of the shifted symbol
$\phi_\kappa(x)=(x+\kappa^2)^{-\beta}$ ($\beta>0$, $\kappa>0$) in the weighted Laplace space $H_w$
introduced in Section~\ref{sec:LaplaceSpaces}.
The unshifted symbol $x^{-\beta}$ is considered later as a comparison case in the Stieltjes and
BURA constructions.
Our emphasis is on norm-adapted constructions that directly optimize the $H_w$ error, together with
classical rational approximation families used as benchmarks.
We proceed in two steps: first the optimization-based formulation used in our computations, then
classical constructions used as comparison baselines.
For this paper, we emphasize the computational consequences of the reduced $H_w$ optimization
\begin{table}[ht!]
  \centering\small
  \caption{Comparison with classical spectral and rational approximation bounds.}
  \label{tab:comparison}
  \begin{tabularx}{\textwidth}{|>{\raggedright\arraybackslash}
    X|>{\raggedright\arraybackslash}X|>{\raggedright\arraybackslash}X|} \hline \textbf{Aspect} &
    \textbf{Classical theory}                                                                  & \textbf{This paper}                                                                                                                        \\ \hline FEM spectral control & Eigenvalue
       monotonicity $\lambda_k \leq \lambda_{k,h}$ and $O(h^{2p})$ convergence for individual eigenvalues.
                                                                                               & Dual-norm inequality
       $\|\mu_h\|_{H'_w} \leq \|\mu\|_{H'_w}$ controlling the whole spectral measure
    via heat-trace energy.                                                                                                                                                                                                                  \\
    \hline
    Approximation norm                                                                         & $L^\infty[1,\Lambda]$ or $L^2$ spectral interval norms.
                                                                                               & Weighted Laplace norm $H_w$ adapted to Laplace transforms and
    heat-trace behavior.                                                                                                                                                                                                                    \\
    \hline
    Rational error rate                                                                        & BURA-type bounds $E_n = O(e^{-c n / \log \Lambda})$.
                                                                                               & Conditional $H_w$ error bound, inherited from an external weighted exponential-sum estimate, with spectral-\allowbreak statistics control. \\
    \hline
    Mesh dependence                                                                            & Error bounds often depend explicitly on spectral range
    $\Lambda_h$ and mesh parameters.                                                           & Rational error is mesh-independent once
    $\phi_\kappa \in H_w$, and FEM error enters separately through $\|\mu\|_{H'_w}$.                                                                                                                                                        \\
    \hline
  \end{tabularx}
\end{table}

In Table~\ref{tab:comparison}, ``best'' is always relative to each method's native target metric:
our construction minimizes an $H_w$ objective, whereas classical BURA targets $L^\infty$ on a
finite interval and quadrature-based constructions inherit their own induced metrics.

\subsection{Membership of shifted symbols in \texorpdfstring{$H_w$}{the weighted Laplace space}}

Recall from Section~\ref{sec:LaplaceSpaces} that $H_w$ consists of functions $f : (0,\infty) \to
  \mathbb{R}$ of the form $f = \mathcal{L}g$ for $g \in L^2(w^{-1})$, with norm $\|f\|_{H_w} =
  \|g\|_{L^2(w^{-1})}$.

\begin{theorem}[Joint admissibility of the weight]\label{thm:joint-weight-admissibility}
  Let $\mu$ be the spectral measure from \eqref{eq:spectral-measures}, let
  $\phi_\kappa(x)=(x+\kappa^2)^{-\beta}$ with $\kappa>0$, and let $w(t)=t^\alpha e^{-\delta t}$ with
  $\delta\geq0$.
  Then the simultaneous membership conditions \[ \mu\in H'_w \qquad\text{and}\qquad \phi_\kappa\in
    H_w \] hold if and only if \[ d-1<\alpha<2\beta-1 \qquad\text{and}\qquad 0\leq\delta<2\kappa^2.
  \]
  In particular, the interval of admissible values of $\alpha$ is non-empty
  if and only if $d<2\beta$.
  With the standard Mat\'ern/SPDE parameterization~\cite{Bolin2013,BolinKirchner2020}
  $\beta=\nu+d/2$, this is equivalent to $\nu>0$.
\end{theorem}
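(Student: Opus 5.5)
The plan is to treat the two membership conditions separately and then intersect the resulting parameter ranges.

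\textbf{Dual side.} By the heat-trace representation, $\mu\in H'_w$ if and only if $\int_0^\infty Z(t)^2 t^\alpha e^{-\delta t}\,dt<\infty$. This is exactly Proposition~\ref{prop:dual-membership-weight}, which I will invoke directly; it gives $\alpha>d-1$. To recall why, Proposition~\ref{prop:heat-trace-asymptotics} shows that near $t=0$ the integrand behaves like $t^{\alpha-d}$, which is integrable iff $\alpha>d-1$. Near $t=\infty$ the integrand behaves like $t^\alpha e^{-(2\lambda_1+\delta)t}$, which is integrable for every $\delta\ge0$ because $\lambda_1>0$. So the dual condition imposes no restriction on $\delta\ge0$.

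\textbf{Primal side.} Here I use the explicit pre-image. From $\Gamma(\beta)(x+\kappa^2)^{-\beta}=\int_0^\infty t^{\beta-1}e^{-\kappa^2 t}e^{-xt}\,dt$ we get $\phi_\kappa=\mathcal{L}g$ with $g(t)=t^{\beta-1}e^{-\kappa^2t}/\Gamma(\beta)$. By the characterisation of $H_w$ as $\mathcal{L}(L^2(w^{-1}))$, with the pre-image unique by injectivity of the Laplace transform, $\phi_\kappa\in H_w$ if and only if
\[
\int_0^\infty g(t)^2\,\frac{dt}{w(t)}=\Gamma(\beta)^{-2}\int_0^\infty t^{2\beta-2-\alpha}e^{-(2\kappa^2-\delta)t}\,dt<\infty .
\]
This is a Gamma integral. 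It is finite iff $2\beta-2-\alpha>-1$, i.e.\ $\alpha<2\beta-1$ (integrability at $0$), and $2\kappa^2-\delta>0$ (integrability at $\infty$). The borderline $\delta=2\kappa^2$ must be excluded: there the integrand is a pure power, which is never integrable on $(0,\infty)$. When both conditions hold, the norm equals $\Gamma(\beta)^{-2}\Gamma(2\beta-1-\alpha)(2\kappa^2-\delta)^{-(2\beta-1-\alpha)}$.

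\textbf{Combining.} Intersecting the two ranges gives $d-1<\alpha<2\beta-1$ together with $0\le\delta<2\kappa^2$. The interval for $\alpha$ is non-empty iff $d-1<2\beta-1$, i.e.\ $d<2\beta$. Substituting $\beta=\nu+d/2$ turns this into $\nu>0$.

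\textbf{Main obstacle.} The delicate step is the ``only if'' direction on the primal side. One must make sure that membership in $H_w$ forces the specific pre-image $g$ to lie in $L^2(w^{-1})$, rather than some other representative. I will handle this through uniqueness of the Laplace transform, as stated in the inner-product subsection: if $\phi_\kappa=\mathcal{L}\tilde g$ with $\tilde g\in L^2(w^{-1})$, then $\tilde g$ is locally integrable and of exponential type. Hence $\tilde g=g$ almost everywhere, and the divergence of the Gamma integral rules out membership.
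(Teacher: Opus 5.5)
Your proposal is correct and follows essentially the same route as the paper. You invoke Proposition~\ref{prop:dual-membership-weight} for $\alpha>d-1$, identify the Laplace pre-image $t^{\beta-1}e^{-\kappa^2 t}/\Gamma(\beta)$, and read off $\alpha<2\beta-1$ and $\delta<2\kappa^2$ from the resulting Gamma integral. You also treat two points the paper leaves implicit: the borderline case $\delta=2\kappa^2$, and uniqueness of the pre-image. For the latter, local integrability of $\tilde g$ by Cauchy--Schwarz needs $\int_0^T w<\infty$, i.e.\ $\alpha>-1$, which the dual condition $\alpha>d-1$ already supplies.
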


\begin{proof}
  By Proposition~\ref{prop:dual-membership-weight}, the condition $\mu\in H'_w$ is equivalent to
  $\alpha>d-1$.

  The shifted symbol has Laplace pre-image
  $g_{\phi_\kappa}(t)=t^{\beta-1}e^{-\kappa^2t}/\Gamma(\beta)$, since

  \[
    \phi_\kappa(x)=\frac{1}{\Gamma(\beta)}\int_0^\infty t^{\beta-1}e^{-(x+\kappa^2)t}\,dt.
  \]
  Therefore its norm is
  \[
    \|\phi_\kappa\|_{H_w}^2
    =\frac{1}{\Gamma(\beta)^2}\int_0^\infty t^{2\beta-2-\alpha}e^{-(2\kappa^2-\delta)t}\,dt.
  \]
  Its behavior near zero requires $2\beta-2-\alpha>-1$, or
  equivalently $\alpha<2\beta-1$.
  Its behavior at infinity requires $2\kappa^2-\delta>0$.
  Thus $\phi_\kappa\in H_w$ is equivalent to $\alpha<2\beta-1$ and $\delta<2\kappa^2$; in that case,
  \[ \|\phi_\kappa\|_{H_w}^2 =\frac{\Gamma(2\beta-1-\alpha)}{\Gamma(\beta)^2
      (2\kappa^2-\delta)^{2\beta-1-\alpha}}.
  \]

  By contrast, the unshifted symbol $x^{-\beta}$ has no exponential decay in its Laplace pre-image.
  If $\delta>0$, its corresponding weighted norm contains the factor $e^{\delta t}$ and therefore
  diverges at infinity.
  If $\delta=0$, integrability near zero requires $\alpha<2\beta-1$, while integrability at infinity
  requires $\alpha>2\beta-1$; these conditions are incompatible.
  Thus the positive shift is essential in this setting.

  Combining these conditions gives the stated result.
  The interval $(d-1,2\beta-1)$ is non-empty exactly when $d<2\beta$, and substituting
  $\beta=\nu+d/2$ gives $d<2\nu+d$, equivalently $\nu>0$.
\end{proof}

\textbf{Remark on the Mat\'ern parameterization.}
With the standard Mat\'ern/SPDE parameterization \cite{Bolin2013,BolinKirchner2020} \[
  \beta=\nu+\frac d2, \] the joint admissibility condition becomes \[ d-1<\alpha<2\nu+d-1
  \qquad\text{or equivalently}\qquad 0<\alpha-(d-1)<2\nu.
\]
The interval is non-empty precisely when $\nu>0$, equivalently $d<2\beta$.
Thus the two-sided condition on $\alpha$ is compatible with the usual Mat\'ern condition used in
the Bolin--Kirchner SPDE framework.

\subsection{Rational Approximation and Exponential Sums}

\subsubsection{The Correspondence}

A rational function with simple real poles in $(-\infty,-\kappa^2)$ has the partial-fraction form

\[ r_n(x) = \sum_{j=1}^n \frac{c_j}{x + \kappa^2 + \gamma_j}, \qquad c_j \in
  \mathbb{R},\qquad \gamma_j > 0.
\]

In this subsection, this is an admissible-class restriction (not a generic representation of all
real rational functions): we optimize over simple real poles shifted by $-\kappa^2$ so that, after
Laplace inversion, the ansatz is a real sum of decaying exponentials.
Complex-conjugate pole pairs are therefore excluded here, since they produce damped oscillatory
terms $e^{-\alpha t}(a\cos(\beta t)+b\sin(\beta t))$ rather than the exponential-sum dictionary
used below.

Its Laplace pre-image (the function $g_{r_n}$ such that $r_n = \mathcal{L}g_{r_n}$) is \[
  g_{r_n}(t) = \sum_{j=1}^n c_j e^{-(\kappa^2+\gamma_j)t}.
\]

The $H_w$ error of rational approximation is therefore equivalent to
approximating $t^{\beta-1}e^{-\kappa^2t}/\Gamma(\beta)$ by an $n$-term exponential sum
in the weighted $L^2(w^{-1})$ norm:

\begin{equation}\label{eq:hw-rational-error}
  \|\phi_\kappa - r_n\|_{H_w}^2
  = \int_0^\infty \left|
  \frac{t^{\beta-1}e^{-\kappa^2t}}{\Gamma(\beta)} -
  \sum_{j=1}^n c_j e^{-(\kappa^2+\gamma_j)t}
  \right|^2 \frac{dt}{w(t)}.
\end{equation}

\subsection{Optimal \texorpdfstring{$H_w$}{weighted-Laplace} Approximation by Variable Projection}

To align construction with the metric used in Section~\ref{sec:num-verification}, we optimize poles
and coefficients by direct minimization of the $H_w$ objective.

For cleaner formulas, define \[ f_{\beta,\kappa}(t)=t^{\beta-1}e^{-\kappa^2t}, \qquad
  g_n(t;\theta)=\sum_{j=1}^n c_j e^{-(\kappa^2+\gamma_j)t}, \qquad \theta=(c,\gamma),\;\gamma_j>0, \]
and \[ \mathcal{J}_n(c,\gamma):=\int_0^\infty \left|f_{\beta,\kappa}(t)-g_n(t;\theta)\right|^2
  \frac{dt}{w(t)}.
\]
This is equivalent to minimizing $\|\phi_\kappa-r_n\|_{H_w}^2$ up to the
constant factor $\Gamma(\beta)^{-2}$.

Introduce the weighted inner product and norm on $L^2(w^{-1})$ by \[ \langle
  u,v\rangle_{w^{-1}}:=\int_0^\infty u(t)v(t)\,\frac{dt}{w(t)}, \qquad \|u\|_{w^{-1}}^2:=\langle
  u,u\rangle_{w^{-1}}.
\]
Also set $e_j^\gamma(t):=e^{-(\kappa^2+\gamma_j)t}$.

\begin{proposition}[Quadratic form and elimination of linear coefficients]
  \label{prop:quadratic-form-elimination}
  For fixed poles $\gamma$, the objective is \[ \mathcal{J}_n(c,\gamma)=a-2\,b(\gamma)^T c + c^T
    A(\gamma)c, \] with \[ a=\|f_{\beta,\kappa}\|_{w^{-1}}^2, \qquad b_j(\gamma)=\langle
    f_{\beta,\kappa},e_j^\gamma\rangle_{w^{-1}}, \qquad A_{jk}(\gamma)=\langle
    e_j^\gamma,e_k^\gamma\rangle_{w^{-1}}.
  \]
  If $A(\gamma)$ is invertible, then
  \[
    c^*(\gamma)=A(\gamma)^{-1}b(\gamma),
    \qquad
    \widehat{\mathcal{J}}_n(\gamma):=
    \min_c\mathcal{J}_n(c,\gamma)=a-b(\gamma)^T A(\gamma)^{-1}b(\gamma).
  \]
\end{proposition}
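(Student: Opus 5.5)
The plan is to expand the square in $\mathcal{J}_n$ by bilinearity of $\langle\cdot,\cdot\rangle_{w^{-1}}$ and then eliminate $c$ by minimizing a quadratic with positive semidefinite Hessian.

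First I check that every quantity is finite. The terms $a$, $b_j(\gamma)$ and $A_{jk}(\gamma)$ are all weighted $L^2(w^{-1})$ inner products. Write
\[
  u(t)=f_{\beta,\kappa}(t)\,t^{-\alpha/2}e^{\delta t/2},\qquad
  v_j(t)=e_j^\gamma(t)\,t^{-\alpha/2}e^{\delta t/2}.
\]
It suffices that $u$ and the $v_j$ lie in $L^2(0,\infty)$, since Cauchy--Schwarz then bounds every cross term.

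For $f_{\beta,\kappa}$ this is exactly the computation in the proof of Theorem~\ref{thm:joint-weight-admissibility}. There $a=\Gamma(2\beta-1-\alpha)(2\kappa^2-\delta)^{-(2\beta-1-\alpha)}$, which is finite under the admissibility conditions $\alpha<2\beta-1$ and $\delta<2\kappa^2$.

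For $e_j^\gamma$ the integrand is $t^{-\alpha}e^{-(2\kappa^2+2\gamma_j-\delta)t}$. This decays at infinity because $\gamma_j>0$ and $\delta<2\kappa^2$. Near zero it needs $\alpha<1$, and that condition is not automatic from $\alpha>d-1$. I would state it as a standing assumption, namely that the basis functions lie in $L^2(w^{-1})$. Equivalently, the proposition is read under whatever conditions make $A(\gamma)$ finite; this is implicit in the hypothesis that $A(\gamma)$ is invertible.

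Next I expand
\[
  \|f-\textstyle\sum_j c_je_j^\gamma\|_{w^{-1}}^2
  = a-2\sum_j c_j\langle f,e_j^\gamma\rangle_{w^{-1}}+\sum_{j,k}c_jc_k\langle e_j^\gamma,e_k^\gamma\rangle_{w^{-1}} .
\]
This gives $a-2b^Tc+c^TA c$ directly.

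For the elimination step, $A(\gamma)$ is a Gram matrix, so it is symmetric positive semidefinite. If it is invertible it is positive definite. Completing the square gives
\[
  \mathcal{J}_n=(c-A^{-1}b)^TA(c-A^{-1}b)+a-b^TA^{-1}b .
\]
The first term is nonnegative and vanishes exactly at $c^*=A^{-1}b$. So $c^*$ is the unique minimizer, and the minimal value is $a-b^TA^{-1}b$.

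The only real obstacle is the integrability of the exponential basis near $t=0$ against $w^{-1}=t^{-\alpha}e^{\delta t}$. I would handle it by making that assumption explicit, as described in the first step, and otherwise note that the algebra is purely Hilbert-space.
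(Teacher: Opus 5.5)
Your proof is correct and is the argument the paper takes for granted: the paper states this proposition without a separate proof, and its content is exactly your bilinear expansion followed by completing the square with the symmetric, positive definite Gram matrix $A(\gamma)$. The integrability condition $\alpha<1$ that you isolate for the exponentials $e_j^\gamma$ is precisely the hypothesis the paper later imposes in Proposition~\ref{prop:model-weight-closed-forms}, and you are right that $\alpha>d-1$ does not imply it; for $d\ge 2$ the two conditions are in fact incompatible.
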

For completeness, the elimination identity gives the following best-error characterization.
\begin{proposition}[Best $H_w$-error identity]
  \label{prop:best-hw-error}
  The squared best $n$-term rational approximation error in $H_w$ is \[
    E_n:=\inf_\gamma\|\phi_\kappa-r_n\|_{H_w}^2
    =\frac{1}{\Gamma(\beta)^2}\inf_\gamma\widehat{\mathcal{J}}_n(\gamma)
    =\frac{1}{\Gamma(\beta)^2}\inf_{c\in\mathbb{R}^n,\;\gamma}\mathcal{J}_n(c,\gamma).
  \]
\end{proposition}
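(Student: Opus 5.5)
The statement to prove is Proposition~\ref{prop:best-hw-error}. I would derive it directly from the norm identity \eqref{eq:hw-rational-error} together with Proposition~\ref{prop:quadratic-form-elimination}.

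The first step is to record the scaling relation. For any admissible $(c,\gamma)$ with $\gamma_j>0$, the Laplace pre-image of $r_n$ is $g_{r_n}=g_n(\cdot;\theta)$, and the pre-image of $\phi_\kappa$ is $f_{\beta,\kappa}/\Gamma(\beta)$. The rescaling $c\mapsto\Gamma(\beta)c$ is a bijection of $\mathbb{R}^n$, so \eqref{eq:hw-rational-error} gives
\[
\|\phi_\kappa-r_n\|_{H_w}^2=\Gamma(\beta)^{-2}\,\mathcal{J}_n(\Gamma(\beta)c,\gamma).
\]
Taking the infimum over all $c\in\mathbb{R}^n$ and all admissible $\gamma$ then yields the outer equality
\[
E_n=\Gamma(\beta)^{-2}\inf_{c,\gamma}\mathcal{J}_n(c,\gamma).
\]
The definition of $E_n$ as "$\inf_\gamma$" is to be read as the infimum over all $n$-term rational functions $r_n$ of the stated form, that is, jointly over $(c,\gamma)$.

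The second step is the middle equality. Here I would write the joint infimum as an iterated one,
\[
\inf_{c,\gamma}\mathcal{J}_n=\inf_\gamma\inf_c\mathcal{J}_n(c,\gamma).
\]
This interchange is valid for infima over product sets with no further hypotheses. By Proposition~\ref{prop:quadratic-form-elimination}, the inner infimum equals $\widehat{\mathcal{J}}_n(\gamma)$ whenever $A(\gamma)$ is invertible. The standing assumptions ($\phi_\kappa\in H_w$ and $\int e^{-2(\kappa^2+\gamma_j)t}w^{-1}\,dt<\infty$) ensure that $a$, $b$ and $A$ are finite.

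The main obstacle is the set of $\gamma$ at which $A(\gamma)$ is singular. Since $A$ is a Gram matrix in $L^2(w^{-1})$, it is singular exactly when the exponentials $e_j^\gamma$ are linearly dependent, which happens exactly when two of the $\gamma_j$ coincide (distinct exponentials are independent). For such $\gamma$, the collapsed sum is an $m$-term sum with $m<n$ and distinct poles. Its value $\inf_c\mathcal{J}_n$ is therefore $\widehat{\mathcal{J}}_m$ at the reduced poles. That value is also approached by $\widehat{\mathcal{J}}_n$ at nearby distinct poles, since adding a pole with coefficient zero is admissible and $\widehat{\mathcal{J}}_n\le$ the value at that $c$.

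To handle this cleanly, I would define $\widehat{\mathcal{J}}_n(\gamma)$ as $\inf_c\mathcal{J}_n(c,\gamma)$ in general. This inner minimum is always attained and equals $a-\|P_\gamma f\|^2$, where $P_\gamma$ is the orthogonal projection onto $\operatorname{span}\{e_j^\gamma\}$. With this definition the formula $a-b^TA^{-1}b$ applies on the dense set of distinct poles, and the displayed chain of equalities holds without exceptions.
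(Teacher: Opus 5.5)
Your proposal is correct and follows essentially the paper's route. The paper gives no separate proof; it simply combines the rescaling implicit in \eqref{eq:hw-rational-error} with the coefficient elimination of Proposition~\ref{prop:quadratic-form-elimination} inside the iterated infimum $\inf_\gamma\inf_c$, which is exactly what you do. Your treatment of coincident poles fills in a detail the paper leaves implicit, with one wording caveat: state it as the inequality $\widehat{\mathcal{J}}_n(\tilde\gamma)\le\widehat{\mathcal{J}}_m(\gamma')$, obtained by padding with distinct zero-coefficient poles, which is all the infimum identity needs. Do not phrase it as convergence along nearby distinct poles, because that limit can be strictly smaller owing to confluent terms such as $t\,e^{-(\kappa^2+\gamma_j)t}$.
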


The closed-form formulas and gradient in Proposition~\ref{prop:model-weight-closed-forms} are
central for the application: they turn the $H_w$-adapted rational approximation into an explicit
computational pipeline for pole optimization.
As a result, each optimization step can be evaluated without numerical quadrature, improving both
robustness and efficiency in practice.
The resulting approximants and error trends are tested in Section~\ref{sec:num-verification}.

\subsection{Convergence as the Number of Poles Increases}

The rate-transfer proposition above gives the organizing principle for convergence as the number of
poles increases.
It does not by itself establish a rate for the best approximation error $E_n$; any such rate must
come from an external approximation result for the admissible shifted exponential-sum class.

\subsubsection{Best Uniform Rational Approximation (BURA)}

For FEM applications, Hofreither \cite{Hofreither2021} introduced BURA.
It is the best rational approximant in $L^\infty$ norm on the interval $[1,\Lambda_h]$.
Here $\Lambda_h$ is the spectral condition number, defined by
\begin{equation*}
  \Lambda_h = \frac{\lambda_{N,h}}{\lambda_{1,h}}.
\end{equation*}

Choose a best approximant \[ r_n^* \in \operatorname*{arg\,min}_{r \in \mathcal{R}_{n,n}} \max_{x
    \in [1,\Lambda_h]} |x^\beta r(x) - 1|.
\]
Thus $r_n^*$ is a rational function, not the scalar minimum value.

The BURA interval error is commonly written as

\[ \mathcal
  E_n^{\mathrm{BURA}}(\beta, \Lambda_h) := \max_{x \in [1,\Lambda_h]} |x^\beta r_n^*(x) - 1|.
\]
Classical results may provide bounds of the form
\[
  \mathcal E_n^{\mathrm{BURA}}(\beta, \Lambda_h) \leq C \exp\!
  \left(-\frac{\pi^2 n}{\log \Lambda_h}\right).
\]
In practice we use these constructions as external baselines to evaluate the
benefit of $H_w$-adapted optimization; implementation details follow
standard Remez/Zolotarev workflows \cite{Hofreither2021,Stahl1997}.

\subsubsection{Plausible asymptotics and conditional rate transfer}

The BURA discussion also clarifies why a separate global approximation result is needed.
BURA controls a pointwise relative error on a finite spectral interval, whereas the $H_w$ norm is
defined through Laplace pre-images on $(0,\infty)$.
Consequently, a BURA interval estimate does not by itself imply an $H_w$ estimate; a conversion
would require a separate stability theorem connecting these two norms.

The exact identity \eqref{eq:hw-rational-error} shows that the relevant approximation problem is
the approximation of \[ f_{\beta,\kappa}(t)=t^{\beta-1}e^{-\kappa^2t} \] by admissible exponential
sums in the weighted norm $L^2(w^{-1})$.
Classical constructions suggest rapid convergence as the number of poles increases, but the exact
rate depends on the admissible pole class and on how the endpoints $t=0$ and $t=\infty$ are
treated.
Near $t=0$ the target has algebraic behavior $t^{\beta-1}$, while its tail has exponential decay
governed by $\kappa^2$.
Thus finite-interval rational or exponential-sum estimates do not by themselves establish a global
$H_w$ estimate.

We, the authors, believe that under suitable conditions on the parameters and the admissible pole
class, a result of the following type should hold: there exist constants $C>0$ and $\rho>0$ such
that \[ \inf_{c,\gamma}\left\|f_{\beta,\kappa} -\sum_{j=1}^n c_j
  e^{-(\kappa^2+\gamma_j)\,\cdot}\right\|_{L^2(w^{-1})} \leq C e^{-\rho n}.
\]
We do not prove this estimate here.
If such an estimate were established, then the normalization in the definition of $E_n$ would give
the conditional bound \[ E_n \leq \Gamma(\beta)^{-2}C^2 e^{-2\rho n}.
\]
The constants and the range of parameters for which this rate holds remain to be determined.

\subsubsection{Stieltjes Integral as a Possible Proof Route}

For $0<\beta<1$, a possible route to an external estimate is the Stieltjes representation

\[ x^{-\beta} = \frac{\sin(\pi\beta)}{\pi} \int_0^\infty \frac{t^{1-\beta}}{x +
    t}\, \frac{dt}{t}, \] which is a weighted integral of $1/(x+t)$ against the measure $\mu_\beta =
  \frac{\sin(\pi\beta)}{\pi} t^{-\beta} dt$.
Applying an $n$-point Gauss quadrature rule with nodes $\{t_j\}$ and weights $\{w_j^G\}$ for the
measure $\mu_\beta$ gives:

\[ r_n^G(x) = \sum_{j=1}^n \frac{w_j^G}{x + t_j}
  \approx x^{-\beta}.
\]
This is an $(n-1,n)$ rational function with positive weights and positive poles, a structure that
is natural for $r_n \in H_w$ and for operator-kernel applications.

For shifted symbols, the same quadrature idea is applied after replacing $x$ by $x+\kappa^2$,
producing the admissible shifted poles used above.
To turn this construction into the global weighted estimate, one would still need a quadrature
error analysis together with control of the $t=0$ and $t=\infty$ tails in the weighted Laplace
norm.
The representation is also directly applicable only for $0<\beta<1$; larger values require a
generalized Stieltjes representation or another external approximation theorem.

Thus the Stieltjes and BURA constructions provide possible external convergence inputs or
comparison baselines, while the variable-projection construction is optimized directly in the $H_w$
metric.
The numerical experiments in Section~\ref{sec:num-verification} compare these approaches at finite
$n$.

%% file: matern-application.tex
\section{Application to Gaussian Matérn Fields}

\subsection{Matérn Fields via the SPDE Representation}

A Gaussian Matérn field $u$ on $\Omega \subset \mathbb{R}^d$ with smoothness $\nu > 0$ and
correlation length $\kappa^{-1}$ satisfies the SPDE
\cite{LindgrenRueLindstrom2011,Bolin2013,BolinKirchner2020}

\[ (\kappa^2 -
    \Delta)^{(\nu + d/2)/2} u = \mathcal{W}, \]

where $\mathcal{W}$ is spatial
white noise.
Setting $\beta = \nu + d/2$, the covariance operator is $\mathcal{C} = (\kappa^2 -
    \Delta)^{-\beta}$, with spectral representation

\[ \mathcal{C} u_k = (\lambda_k
    + \kappa^2)^{-\beta} u_k, \]

where $\{\lambda_k, u_k\}$ are
Dirichlet--Laplacian eigenpairs on $\Omega$.
The shifted spectral symbol is $\phi_\kappa(x) = (x + \kappa^2)^{-\beta}$.

\subsection{Rational Approximation → Mixture of Simple SPDEs}

A rational approximant to the shifted spectral symbol

\[ r_n(x) = \sum_{j=1}^n
    \frac{c_j}{x + \kappa^2 + \gamma_j} \]

replaces the covariance operator by

\[ \mathcal{C}_{r_n} = \sum_{j=1}^n c_j\, (\kappa^2 + \gamma_j - \Delta)^{-1}.
\]

Each summand $(\kappa_j^2 - \Delta)^{-1}$ (with $\kappa_j^2 = \kappa^2 + \gamma_j$) is a resolvent
covariance operator with modified length scale $\kappa_j^{-1}$.
In the standard SPDE parametrization, it corresponds to a Matérn field with covariance exponent
$1$, or smoothness $\nu=1-d/2$; in one spatial dimension this is the exponential Matérn-$1/2$
field.
Therefore:

\begin{quote}
    The rational approximation of the Matérn covariance symbol gives a finite linear combination of
    resolvent covariance operators, each solvable as a standard elliptic PDE.
    If the coefficients $c_j$ are nonnegative, this combination is the covariance operator of a sum of
    independent resolvent fields.
\end{quote}

This is the approach of \cite{Bolin2013,BolinKirchner2020}, and gives a computationally tractable
approximation with sparse precision matrices.

\subsection{Error in Spectral Statistics via the Duality Bound}

Using the spectral measures defined in \eqref{eq:spectral-measures}, the covariance operators have
the spectral trace representations
\begin{equation*}
    \mathbb{E}(\|u\|^2_{L^2(\omega)})=\operatorname{tr}(\mathcal{C}) = \sum_{k=1}^{\infty} \phi_\kappa(\lambda_k),
\end{equation*}
\begin{equation*}
    \mathbb{E}(\|u_{r_n,h}\|^2_{L^2(\omega)})=\operatorname{tr}(\mathcal{C}_{r_n,h}) = \sum_{k=1}^{N_h} r_n(\lambda_{k,h}),
\end{equation*}
where $\mathcal{C}_{r_n,h}=r_n(A_h)$ and $A_h$ is the discrete Dirichlet Laplacian.
Both expressions give the expected energy (squared $L^2$ norm) of the solution.
Thus the error in the trace of the approximate covariance operator decomposes as:

\[ \operatorname{tr}(\mathcal{C}) - \operatorname{tr}(\mathcal{C}_{r_n,h}) =
    \underbrace{\langle \mu - \mu_h,\, \phi_\kappa \rangle}_{\text{FEM error}} + \underbrace{\langle
        \mu_h,\, \phi_\kappa - r_n \rangle}_{\text{rational approx.
                error}}.
\]

\textbf{FEM error term.}
By the dual-norm inequality \eqref{eq:dual-norm-ineq} proved in Section~\ref{sec:LaplaceSpaces}:

\[ |\langle \mu - \mu_h, \phi_\kappa \rangle| \leq \|\mu - \mu_h\|_{H'_w}\,
    \|\phi_\kappa\|_{H_w} \leq C\, h^{2p}\, \|\phi_\kappa\|_{H_w}.
\]

\textbf{Rational approximation error term.}
By Cauchy--Schwarz and $\|\mu_h\|_{H'_w} \leq \|\mu\|_{H'_w}$:

\[ |\langle
    \mu_h, \phi_\kappa - r_n \rangle| \leq \|\mu_h\|_{H'_w}\, \|\phi_\kappa - r_n\|_{H_w} \leq
    \|\mu\|_{H'_w}\, \|\phi_\kappa - r_n\|_{H_w}.
\]

\textbf{Combined bound.}
Setting $E_n = \|\phi_\kappa - r_n\|_{H_w}$:

\begin{equation}\label{eq:matern-trace-error-bound}
    |\operatorname{tr}(\mathcal{C}) -
    \operatorname{tr}(\mathcal{C}_{r_n,h})| \leq \|\phi_\kappa\|_{H_w}\, C h^{2p} + \|\mu\|_{H'_w}\, E_n.
\end{equation}

To balance the two contributions, one should choose $n$ so that $E_n \lesssim h^{2p}$.
A more specific relation between $n$ and $h$ requires an actual convergence theorem for $E_n$; the
rational-approximation section only gives a conditional rate-transfer statement and does not prove
such a rate here.

\subsection{Summary of the Full Framework}

The chain of ideas can be summarized as:

\[
    \begin{gathered}
        \text{Matérn covariance} \\
        \Downarrow \\
        \phi_\kappa(\lambda) = (\lambda + \kappa^2)^{-\beta} \in H_w \\
        \Downarrow \\
        \text{rational approximation in }
        H_w \\ \Downarrow \\ \text{Mixture of } n \text{ resolvent covariance fields} \\ \Downarrow \\
        |\operatorname{tr}(\mathcal{C}) - \operatorname{tr}(\mathcal{C}_{r_n,h})| \leq
        \|\phi_\kappa\|_{H_w} \cdot C h^{2p} + \|\mu\|_{H'_w} \cdot E_n.
    \end{gathered}
\]

\subsection*{Key Parameter Choices}

\begin{center}
    \begin{tabularx}{\textwidth}{>{\raggedright\arraybackslash}l >{\raggedright\arraybackslash}l X}
        \textbf{Parameter}         & \textbf{Admissible range}       & \textbf{Role}                                               \\
        \hline
        $\alpha$ (weight exponent) & $d-1 < \alpha < 2\nu + d - 1$   & Ensures both $\mu \in H'_w$ and $\phi_\kappa \in H_w$.      \\
        $\delta$ (weight decay)    & $0 \leq \delta < 2\kappa^2$     & Ensures decay of the Laplace pre-image of $\phi_\kappa$.    \\
        $n$ (number of poles)      & chosen so $E_n \lesssim h^{2p}$ & Balances rational and FEM errors.                           \\
        $\beta = \nu + d/2$        & $\nu > 0$                       & Mat\'ern smoothness; larger $\beta$ improves approximation. \\
    \end{tabularx}
\end{center}

%% file: numerical-verification.tex
\section{Numerical Verification}
\label{sec:num-verification}
This section presents numerical experiments that illustrate the main qualitative features of the
theory developed in this paper.
We verify first the conforming finite element inequality \(\|\mu_h\|_{H'_w} \leq \|\mu\|_{H'_w}\)
and then the practical accuracy gains of metric-adapted rational approximation in the primal space
$H_w$.
The approximation families are exactly those introduced in Section~\ref{sec:rational-approx}: an
optimization-based variable-projection construction and baseline comparison methods.
The experiments are performed on the unit square $\Omega = (0,1)^2$ with homogeneous Dirichlet
boundary conditions.
In addition to the primary triangular-mesh computations, we use a tensor-product FEM construction
on aligned Cartesian meshes for efficient spectral benchmarks.

\subsection{Computational Setup}
We discretize the Laplace eigenvalue problem on $\Omega=(0,1)^2$ with standard piecewise linear
conforming finite elements on uniform triangular meshes $h=2^{-k}$ for $k=3,4,5,6$.
For the dual norm experiment we fix $w(t)=t^\alpha e^{-\gamma t}$ with $(\alpha,\gamma)=(2,1)$ and
compute the first $N=200$ discrete eigenvalues on each mesh.
The dual norm is evaluated through the heat-trace representation \[ \|\mu_h\|_{H'_w}^2 =
  \int_0^\infty Z_h(t)^2\,w(t)\,dt, \qquad Z_h(t)=\sum_{k=1}^N e^{-\lambda_{k,h}t}, \] using a
logarithmic quadrature rule in $t$.
The same quadrature is used for the continuous reference norm.

For the rational approximation experiment we consider the shifted Mat\'ern symbol \[
  \phi(x)=(x+\kappa^2)^{-\beta}, \qquad \kappa=1, \qquad \beta=2.5, \] and compare several $n$-pole
rational approximants in the primal norm $H_w$.
To illustrate the dependence on the metric, we report two representative weights,
$(\alpha,\gamma)=(0.25,1)$ and $(0.5,1)$.
For aligned Cartesian benchmark meshes we also exploit the tensor-product structure of the
spectrum, so that auxiliary comparisons can be assembled efficiently from 1D eigenvalues.
The full tensor-product derivation and the associated 2D/3D formulas are collected in
Appendix~\ref{app:tensor-product-fem}.
As an additional consistency check, we verified the dual norm by a matrix-based shifted-solve
strategy; the algorithmic details are deferred to Appendix~\ref{app:matrix-dual-norm}.

For reproducibility, all approximation methods are compared at the same rational degree $n$ and
evaluated with the same quadrature-based $H_w$ error pipeline.
The optimization-based method uses gradient-based variable projection with analytic reduced
gradients and box constraints on log-poles (L-BFGS-B), initialized by one deterministic log-grid
start plus randomized restarts (default four starts in total, fixed seed).
The fixed-grid VP and joint NLS baselines use the same target symbol, weights, and degree range.

\subsection{Verification of the Dual Norm Inequality}
The key qualitative prediction of Theorem~\ref{thm:main-dual-norm} is that the conforming discrete
spectral measure has no larger $H'_w$ norm than the continuous spectral measure.
In the unit-square experiment we approximate both norms and monitor their difference as the mesh is
refined.

The continuous norm is approximated using the reference eigenvalues of the continuous Laplacian on
the unit square, $\lambda_{mn} = \pi^2(m^2+n^2)$ for $m,n\in\mathbb{N}$, truncated at the same
spectral index.

In the reported computations we observe:
\begin{itemize}
  \item the inequality
        $\|\mu_h\|_{H'_w} \leq \|\mu\|_{H'_w}$ for every conforming mesh;
  \item a decreasing gap
        $\|\mu\|_{H'_w}-\|\mu_h\|_{H'_w}$ as $h\to 0$, reflecting the convergence
        of the discrete heat trace to the continuous heat trace;
  \item numerical evidence that the dual norm is stable under mesh refinement.
\end{itemize}

\begin{figure}[ht!]
  \centering
  \includegraphics[width=0.80\linewidth]{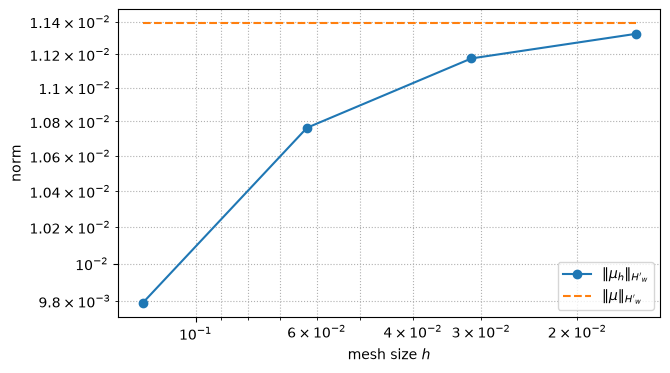}
  \caption{Computed values of $\|\mu_h\|_{H'_w}$ and the continuous reference
    $\|\mu\|_{H'_w}$ versus mesh refinement on the unit square, for
    $w(t)=t^2e^{-t}$ and spectral truncation $N=200$.
    The discrete curve remains below the continuous reference on all tested meshes, consistent with
    $\|\mu_h\|_{H'_w}\leq \|\mu\|_{H'_w}$.
  }
  \label{fig:dual-norm-inequality}
\end{figure}

\begin{figure}[ht!]
  \centering
  \includegraphics[width=0.80\linewidth]{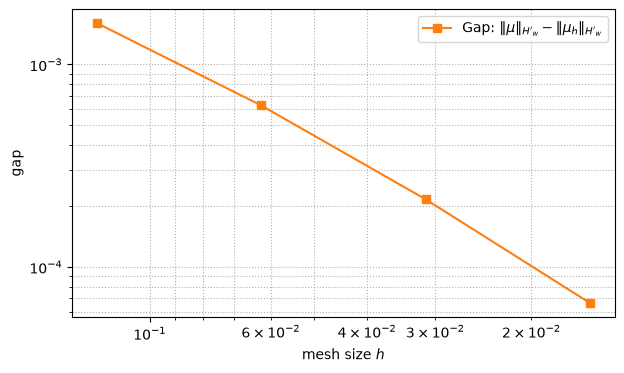}
  \caption{Gap $\|\mu\|_{H'_w} - \|\mu_h\|_{H'_w}$ versus mesh size for the
    unit-square verification problem.}
  \label{fig:dual-norm-gap-2d}
\end{figure}

The computations confirm the expected monotonicity pattern: the inequality $\|\mu_h\|_{H'_w} \leq
  \|\mu\|_{H'_w}$ is preserved over the tested meshes, and Figure~\ref{fig:dual-norm-gap-2d} shows a
decreasing gap under refinement.
Together with Figure~\ref{fig:dual-norm-inequality}, this gives direct numerical support for the
qualitative prediction of Theorem~\ref{thm:main-dual-norm} in the unit-square setting.
Additional numerical checks with moderately varied spectral truncation levels and log-quadrature
resolutions (not shown) produced only small changes in the reported values and did not alter the
observed inequality or gap trend.

\subsection{Rational Approximation Experiments}
To illustrate the approximation theory from Section~\ref{sec:rational-approx}, we compare three
families of $n$-pole rational approximants for the shifted symbol $\phi(x)=(x+1)^{-2.5}$.
The first family minimizes the reduced $H_w$ objective directly by gradient-based variable
projection over the pole locations.
The second uses a fixed logarithmic pole grid with optimal linear coefficients.
The third is a joint nonlinear least-squares fit in Laplace space, included as a flexible baseline
that does not explicitly optimize $H_w$.

For each weight we plot the relative error \[ \frac{\|\phi-r_n\|_{H_w}}{\|\phi\|_{H_w}} \] for
$n=0,1,\dots,10$.
This comparison focuses on the most practical question: does optimizing the poles for the $H_w$
metric produce a visible gain over more generic constructions at modest rational degree?

\begin{figure}[ht!]
  \centering
  \includegraphics[width=0.92\linewidth]{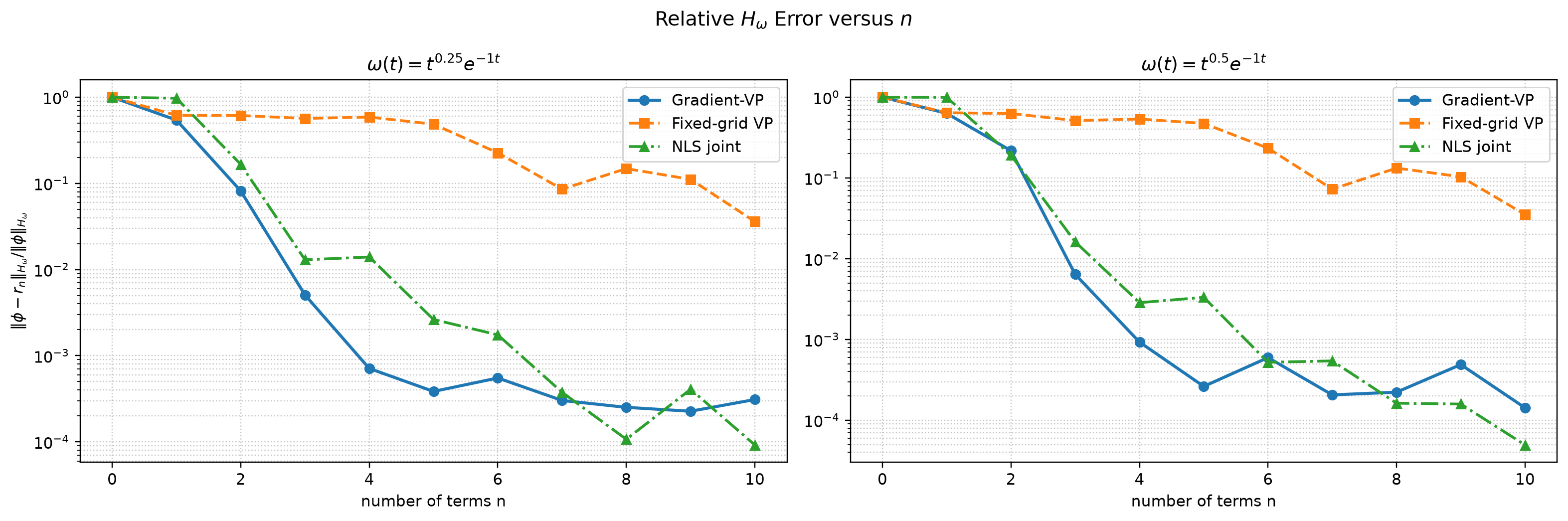}
  \caption{Relative $H_w$ error $\|\phi-r_n\|_{H_w}/\|\phi\|_{H_w}$ versus
    number of poles $n$ for $\phi(x)=(x+1)^{-2.5}$ and two representative
    weights.
    Methods: gradient-VP (target-metric optimization), fixed-grid VP, and joint NLS in Laplace space.
    At the same $n$, gradient-VP is typically the most accurate or competitive across the tested range.
  }
  \label{fig:homega-relative-error}
\end{figure}

Figure~\ref{fig:homega-relative-error} shows that the metric-adapted optimization substantially
improves the primal error over the fixed-grid construction and remains competitive with the more
flexible nonlinear least-squares baseline across the tested range of $n$.
The decay is rapid and qualitatively consistent with the rapid-convergence picture from
Section~\ref{sec:rational-approx}, but the main point of the experiment is comparative rather than
asymptotic: the $H_w$-optimized approximant is already more accurate at small and moderate values
of $n$.

\subsection{Application-facing Spectral-sum Test}
To complement the norm-level comparison, we evaluate each rational approximant on a discrete
spectral observable, \[ S_h(f)=\sum_k f(\lambda_{k,h}), \] and report the relative error \[
  E^{\mathrm{spec}}_{n,h} = \frac{|S_h(\phi)-S_h(r_n)|}{|S_h(\phi)|}.
\]
The sums are evaluated on aligned Cartesian $Q_1$ meshes using the
tensor-product spectrum construction from Appendix~\ref{app:tensor-product-fem}.

\begin{figure}[ht!]
  \centering
  \includegraphics[width=0.92\linewidth]{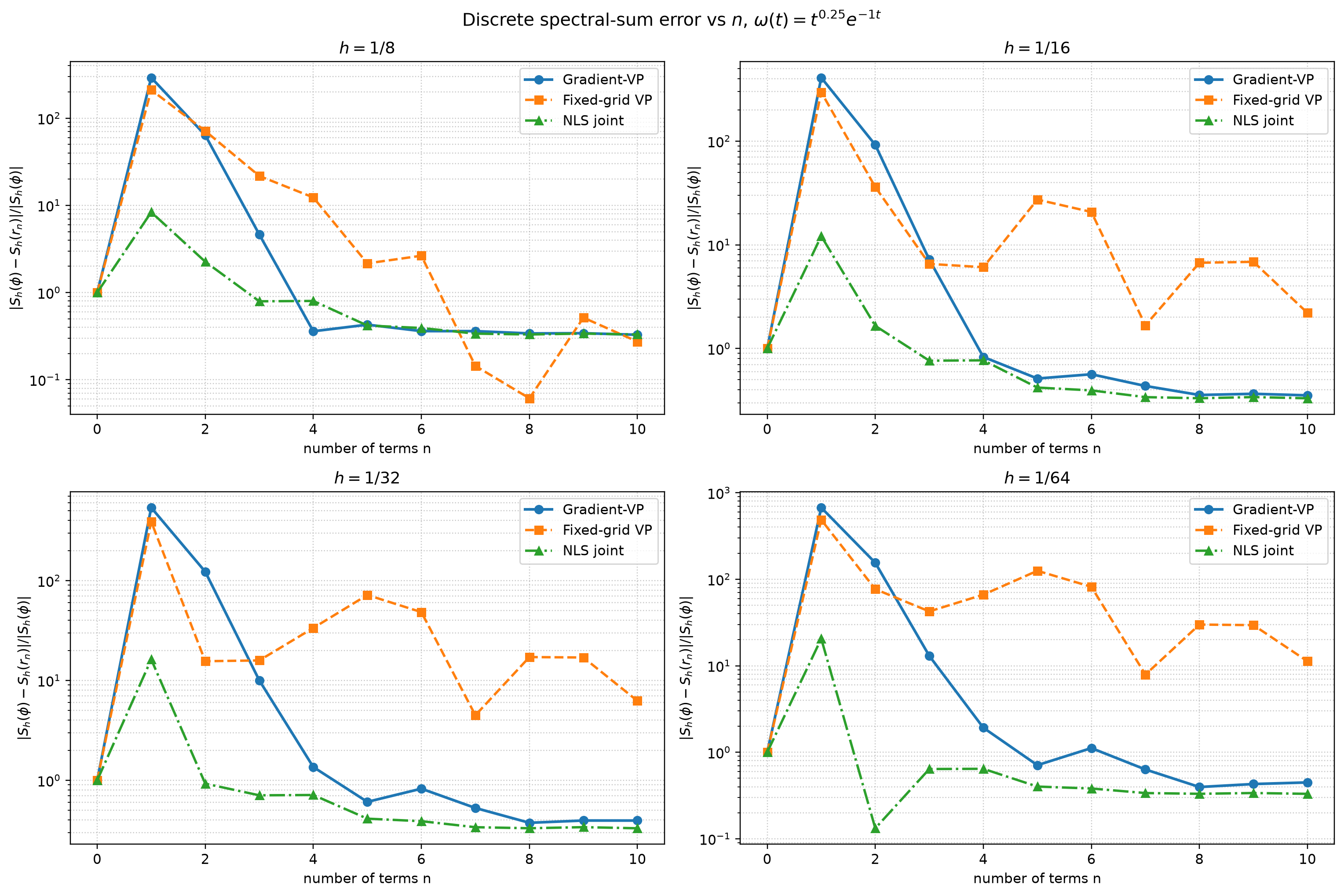}
  \caption{Relative spectral-sum error $E^{\mathrm{spec}}_{n,h}$ versus number
    of poles $n$, shown for several mesh sizes on the unit square.}
  \label{fig:homega-spectral-sum-vs-n}
\end{figure}

\begin{figure}[ht!]
  \centering
  \includegraphics[width=0.92\linewidth]{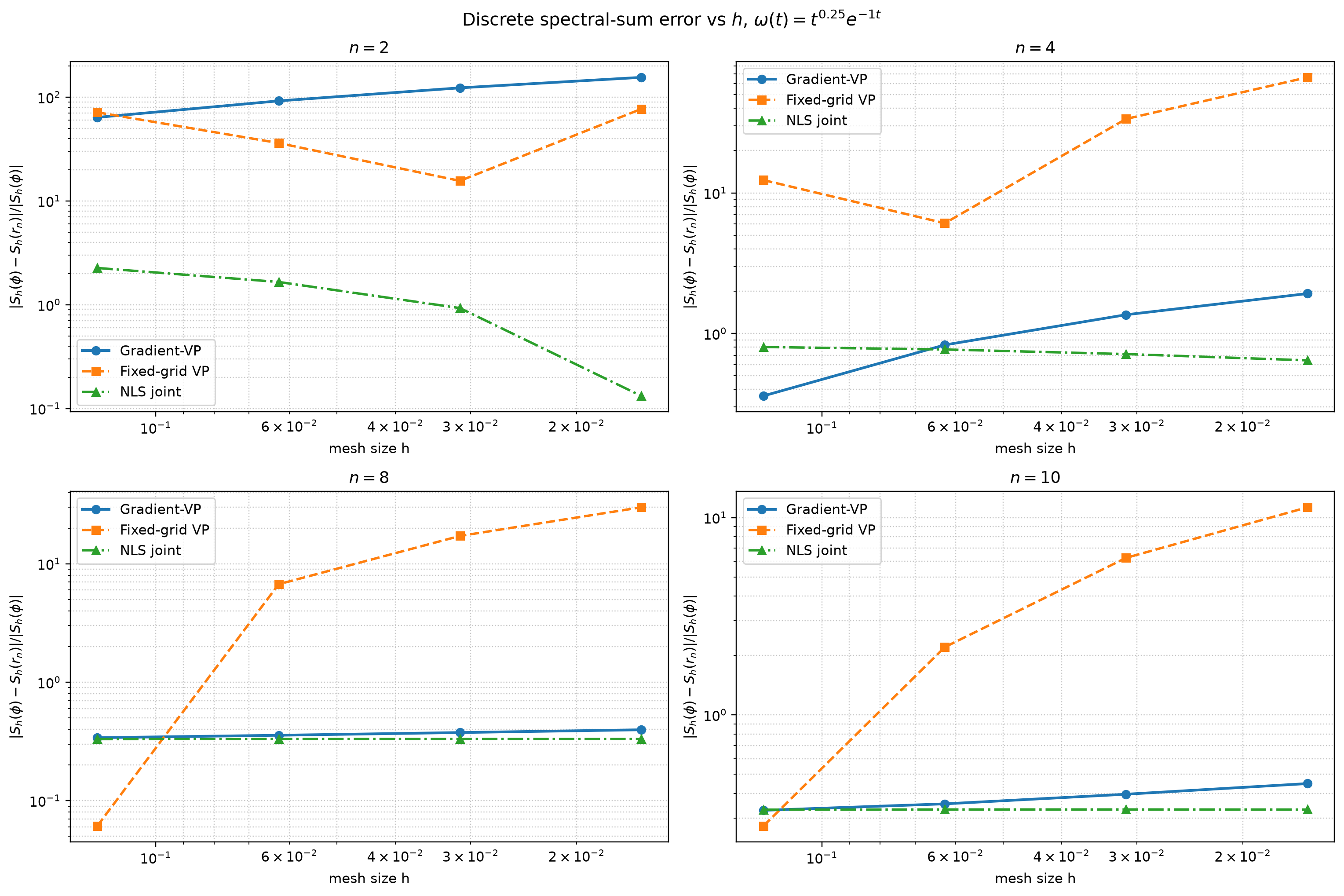}
  \caption{Relative spectral-sum error $E^{\mathrm{spec}}_{n,h}$ versus mesh
    size $h$ for selected values of $n$.}
  \label{fig:homega-spectral-sum-vs-h}
\end{figure}

Figure~\ref{fig:homega-spectral-sum-vs-n} shows that increasing $n$ reduces the observable-level
error across all tested meshes, while Figure~\ref{fig:homega-spectral-sum-vs-h} shows that the same
approximation families remain effective under refinement.
This test links the $H_w$-optimization directly to a concrete spectral quantity and complements the
norm-level comparison in Figure~\ref{fig:homega-relative-error}.

\subsection{Discussion}
The experiments in this section are intended to illustrate two complementary
aspects of the theory:
\begin{itemize}
  \item the discrete spectral measure of conforming FEM does not exceed the
        continuous spectral measure in the weighted Laplace dual norm;
  \item rational approximation benefits from optimization in the target $H_w$
        metric, leading to visibly smaller errors than more
        generic constructions at comparable rational degree;
  \item the same metric-adapted approximants improve an application-facing
        discrete spectral observable across both rational degree and mesh refinement.
\end{itemize}

Taken together, the dual norm plots, the $H_w$-error comparison, and the spectral-sum test give a
concise numerical narrative: the FEM-side monotonicity predicted by the theory is visible under
mesh refinement, and the primal-side optimization is useful not only asymptotically but already in
the low- to moderate-order regime most relevant for computation and observable-level accuracy.

%% file: conclusions.tex
\section{Conclusions and Outlook}
\label{sec:conclusions}

This paper has introduced weighted transform spaces as an abstract framework and weighted Laplace
spaces $H_w$ as its concrete specialization for comparing continuous and discrete spectral measures
of elliptic operators.
The main theoretical contribution is the dual-norm inequality

\[
    \|\mu_h\|_{H'_w} \leq \|\mu\|_{H'_w}, \]

which follows directly from
monotonicity of conforming FEM eigenvalues and the heat-trace representation of the dual norm.
The inequality is both simple and robust: it holds for any conforming finite element discretization
and any non-negative weight $w$ for which the spectral measures belong to $H'_w$.
Via dual pairing with test functions in $H_w$, this norm control transfers directly to uniform
bounds for finite spectral sums and related transformed observables.

A second contribution is the $H_w$-adapted rational approximation of shifted symbols
$\phi_\kappa(x)=(x+\kappa^2)^{-\beta}$ through explicit variable-projection optimization.
The weighted Laplace framework also provides a conditional transfer principle: any verified
estimate for the corresponding weighted Laplace pre-image approximation yields an $H_w$ error
estimate.
This makes the framework relevant to potential applications such as Gaussian covariance
approximation \cite{LangLarssonSchwab2013}, where rational approximants may represent finite
combinations of simpler covariance operators; developing such applications is beyond the scope of
this paper.

The numerical verification section provides computational evidence for these conclusions in the
unit-square experiments.
The theoretical inequality is independent of these experiments; numerically, the computations
preserve $\|\mu_h\|_{H'_w} \leq \|\mu\|_{H'_w}$ under mesh refinement and show a decreasing
dual-norm gap.
The rational-approximation experiments further show that optimization in the target $H_w$ metric
yields smaller errors than the baseline constructions over the tested weights and rational degrees.

Together, the analytical and numerical results establish a coherent picture in which conforming FEM
discretization error and rational approximation error are quantified in the same functional
framework.

Before outlining extensions, we stress the present scope.
The analysis is carried out for the Dirichlet Laplacian, the central inequality relies on
conforming FEM eigenvalue monotonicity, and the rational ansatz is restricted to the structured
class used in our Laplace-domain formulation.
No universal convergence rate for the rational approximation problem is asserted here; rate
statements are conditional on corresponding weighted exponential-sum estimates.
The directions below are therefore plausible extensions rather than results proved here.

\subsection{Extensions to Other Operators and Spectral Measures}

Although our analysis is phrased for the Dirichlet Laplacian, the underlying ideas are
substantially more general.
The key ingredients are:

\begin{itemize}
    \item a discrete spectrum with conforming or monotone discretization
          properties;
    \item a spectral measure that admits a meaningful dual $H'_w$ norm via heat
          trace or a related kernel representation; and
    \item a class of symbols $\phi$ whose Laplace pre-images belong to the
          primal space $H_w$.
\end{itemize}

These ingredients also appear for a wide class of second-order elliptic operators with smooth
coefficients, for Schr\"odinger operators on bounded domains, and more generally for self-adjoint
positive operators with compact resolvent.
For such extensions, the dual-norm approach is expected to remain valid only after verifying a
suitable monotone discretization, a heat-trace or related kernel representation, and membership of
the relevant spectral measures in $H'_w$.

The same perspective is promising for non-Laplacian spectral measures, including fractional
elliptic operators, variable-coefficient operators, and spectral models on graphs or networks.
The relevant transform or kernel representation may need to be adapted in each case: the dual norm
is driven by the resulting reproducing kernel $K(\lambda,\eta)$, which need not be the same Laplace
kernel induced by $w$ in the present setting.

Thus, the following are proposed directions for future investigation rather than results established
in this paper:

\begin{itemize}
    \item extensions to elliptic operators with non-constant coefficients and
          mixed boundary conditions;
    \item spectral measures of fractional Laplacians and other non-local operators
          on bounded domains;
    \item analogous dual-norm inequalities for graph Laplacians and discrete
          operators arising in network models;
    \item alternative weight choices $w$ that are adapted to different spectral
          growth regimes or to operators with singular potentials.
\end{itemize}

By making these extensions, the weighted Laplace perspective may become a useful tool for a broader
class of spectral approximation problems, including those outside the classical setting of the
Dirichlet Laplace operator.

\subsection{Final Remarks}

The weighted Laplace framework provides a unified way to think about both spectral measure
comparison and rational approximation.
Its strength lies in combining the classical heat-trace viewpoint with modern RKHS techniques, and
it provides a concrete $H_w$-adapted optimization framework whose numerical behavior can be tested
at both norm and observable levels.
This combination offers a path for future work on spectral approximation in high-dimensional and
non-standard settings.

%% file: appendix-rkhs-greens-functions.tex
\section{RKHS and Green's-Function Interpretation}\label{app:rkhs-greens-function}

This appendix summarizes the reproducing kernel and Green's-function
perspective that underlies the weighted Sobolev interpretation of $H_w$.

\subsection{Reproducing Kernel Hilbert Spaces}
A Hilbert space $H$ of real-valued functions on a domain $\Omega$ is a
reproducing kernel Hilbert space (RKHS) if point evaluation is a bounded
linear functional on $H$. For each $y \in \Omega$, there exists a unique
function $K(\cdot,y) \in H$ such that

$$
u(y) = \langle u, K(\cdot,y) \rangle_H \quad \text{for all } u \in H.
$$

The function $K : \Omega \times \Omega \to \mathbb{R}$ is the reproducing
kernel of $H$. It is symmetric and positive definite.

\subsection{When the Kernel Is a Green's Function}
Consider an elliptic, self-adjoint, positive-definite operator $L$ of order
$2m$ on $\Omega$, and equip the function space
$H = \operatorname{dom}(L^{1/2})$ with the inner product

$$
\langle u, v \rangle_H = \langle Lu, v \rangle_{L^2}.
$$

If $m > d/2$, then $H$ embeds continuously into $C^0(\overline{\Omega})$ and
thus is an RKHS. The inverse operator $L^{-1}$ has a symmetric integral kernel
$G(x,y)$ satisfying

$$
L_x G(x,y) = \delta(x-y), \qquad G(x,y)=G(y,x).
$$

For any $u \in H$ and fixed $y \in \Omega$, integration by parts gives

$$
u(y) = (L^{-1} Lu)(y) = \int_\Omega G(x,y) (Lu)(x)\,dx = \langle u, G(\cdot,y) \rangle_H.
$$

Thus $G(\cdot,y)$ is the reproducing kernel representer at $y$, and

$$
K(x,y) = G(x,y).
$$

\subsection{Weighted Sobolev Spaces and Laplace-Transform RKHSs}
For weights of the form $w(t)=t^{2s-1}e^{-\alpha t}$, the reproducing kernel
of $H_w$ is

$$
K(\lambda,\mu) = \frac{\Gamma(2s)}{(\lambda+\mu+\alpha)^{2s}}.
$$

This kernel is the Green's function of the weighted differential operator

$$
L = \left(\alpha - \frac{d}{d\lambda}\right)^{2s}
$$

on $(0,\infty)$, with the weight $e^{-\alpha\lambda}$ built into the inner
product. Thus $H_w$ is naturally interpreted as a weighted Sobolev space of
order $s$ in the Laplace-variable domain.

\subsection{Why This Matters}
The RKHS/Green's-function viewpoint explains why the dual norm
$\|\mu\|_{H'_w}^2$ can be written as a squared heat-trace integral. It also
connects our Laplace-transform framework to classical Sobolev spaces and to the
Green's-function structure of elliptic operators.

The specific example in the paper shows that the Laplace-transform RKHS
$H_w$ is not an abstract reproducing kernel space, but one whose kernel has a
concrete differential-operator interpretation.

%% file: appendix-randomized-trace-estimation.tex
\section{Randomized Trace Estimation for the Matrix Dual Norm}
\label{app:matrix-dual-norm}

This appendix records the randomized trace-estimation component of the
eigenvalue-free strategy used to approximate the discrete dual norm from the
finite element stiffness and mass matrices.

For conforming finite elements the discrete eigenvalue problem is

$$
K u = \lambda_h M u.
$$

The main subsection explains how the heat trace is reduced to shifted traces
of the form $\tr((K + \sigma M)^{-1}M)$. The point of this appendix is how to
evaluate those traces without forming eigenpairs.

For symmetric $A$, one has the unbiased identity \cite{Hutchinson1990,AvronToledo2011}

$$
\operatorname{tr}(A) = \mathbb{E}[\xi^T A \xi],
$$

and the following variance identity clarifies the probe choice.

\begin{theorem}[Symmetric Hutchinson Trace Estimator and Optimal Probes]
Let $A\in\mathbb R^{n\times n}$ be symmetric and define
$X:=\xi^\top A\xi$, where $\xi=(\xi_1,\dots,\xi_n)^\top$ has i.i.d. entries with
$\mathbb E[\xi_i]=0$, $\mathbb E[\xi_i^2]=1$, and
$\mathbb E[\xi_i^4]=s_4<\infty$. Then
\begin{equation}
\mathbb E[X]=\operatorname{tr}(A),
\qquad
\operatorname{Var}(X)=2\|A\|_F^2+(s_4-3)\sum_{i=1}^n A_{ii}^2.
\end{equation}
Since $s_4\ge 1$, this implies
\[
\operatorname{Var}(X)\ge 2\sum_{i\ne j}A_{ij}^2,
\]
with equality iff $s_4=1$, i.e., iff the probe is Rademacher
($\mathbb P(\xi_i=\pm1)=\tfrac12$). Thus, among i.i.d. mean-zero, variance-one
probes with finite fourth moment, Rademacher probes minimize single-sample variance.
\end{theorem}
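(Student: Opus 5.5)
We compute the first two moments of $X=\xi^\top A\xi=\sum_{i,j}A_{ij}\xi_i\xi_j$ directly by expanding in the entries of $\xi$. Independence and the moment assumptions $\mathbb E[\xi_i]=0$, $\mathbb E[\xi_i^2]=1$ do most of the work.

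For the mean, linearity gives $\mathbb E[X]=\sum_{i,j}A_{ij}\mathbb E[\xi_i\xi_j]$. Since $\mathbb E[\xi_i\xi_j]=\delta_{ij}$, this reduces to $\sum_i A_{ii}=\tr(A)$.

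For the variance, we expand
\[
\mathbb E[X^2]=\sum_{i,j,k,l}A_{ij}A_{kl}\,\mathbb E[\xi_i\xi_j\xi_k\xi_l].
\]
The fourth moment $\mathbb E[\xi_i\xi_j\xi_k\xi_l]$ vanishes unless the indices pair up, since any index appearing exactly once contributes a factor $\mathbb E[\xi_i]=0$. The surviving configurations are:
\begin{enumerate}
\item all four indices equal, contributing $s_4\sum_i A_{ii}^2$;
\item $i=j\neq k=l$, contributing $\sum_{i\ne k}A_{ii}A_{kk}$;
\item $i=k\neq j=l$ and $i=l\neq j=k$, which by symmetry of $A$ together contribute $2\sum_{i\ne j}A_{ij}^2$.
\end{enumerate}
Subtracting $(\tr A)^2=\sum_i A_{ii}^2+\sum_{i\ne k}A_{ii}A_{kk}$ gives
\[
\operatorname{Var}(X)=(s_4-1)\sum_i A_{ii}^2+2\sum_{i\ne j}A_{ij}^2 .
\]
Writing $2\sum_{i\ne j}A_{ij}^2=2\|A\|_F^2-2\sum_iA_{ii}^2$ recovers the stated form $2\|A\|_F^2+(s_4-3)\sum_iA_{ii}^2$.

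For the lower bound, Jensen's inequality (or Cauchy–Schwarz) gives $s_4=\mathbb E[\xi_i^4]\ge(\mathbb E[\xi_i^2])^2=1$. Hence $(s_4-1)\sum_iA_{ii}^2\ge0$ and $\operatorname{Var}(X)\ge 2\sum_{i\ne j}A_{ij}^2$.

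Equality in this bound needs $(s_4-1)\sum_i A_{ii}^2=0$. When $A$ has a nonzero diagonal entry, this forces $s_4=1$. Equality in Jensen means $\xi_i^2$ is almost surely constant, hence $\xi_i^2=1$, so $\xi_i=\pm1$. The mean-zero condition then forces $\mathbb P(\xi_i=\pm1)=\tfrac12$, i.e.\ Rademacher probes. Conversely, Rademacher probes have $s_4=1$ and attain equality.

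The only delicate point is the equality clause. If $A$ has zero diagonal, equality holds for every admissible probe, so the ``iff'' holds as stated only when $A$ has a nonzero diagonal entry. We will add this qualifier, which also makes the minimality conclusion precise: since the variance is nondecreasing in $s_4$ and $s_4\ge1$, Rademacher probes minimize the single-sample variance.
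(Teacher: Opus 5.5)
Your argument is correct and complete. The paper states this theorem without proof and only cites Hutchinson and Avron--Toledo. Your direct expansion of $\mathbb E[\xi_i\xi_j\xi_k\xi_l]$ into the all-equal case and the three pairing configurations is the standard derivation behind those references. It gives $\operatorname{Var}(X)=(s_4-1)\sum_i A_{ii}^2+2\sum_{i\ne j}A_{ij}^2$, from which the stated identity and the lower bound follow at once.

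Your qualifier on the equality clause is a genuine correction to the statement as written. Take the $2\times 2$ matrix with $A_{12}=A_{21}=1$ and zero diagonal. Then $X=2\xi_1\xi_2$ and $\operatorname{Var}(X)=4=2\sum_{i\ne j}A_{ij}^2$ for every admissible probe, including Gaussian probes with $s_4=3$. So ``equality iff $s_4=1$'' needs $\sum_i A_{ii}^2>0$. The final minimality claim still holds in this degenerate case, only without uniqueness.
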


In practice one uses the Monte Carlo estimator

$$
\widehat{\operatorname{tr}}(A) = \frac{1}{m}\sum_{r=1}^m \xi_r^T A \xi_r,
$$

with independent probes $\xi_1,\dots,\xi_m$.

Applied to the shifted FEM trace, this gives

$$
\widehat\tau(\sigma) = \frac{1}{m}\sum_{r=1}^m \xi_r^T (K + \sigma M)^{-1} M \xi_r,
$$

so each sample requires one solve with the shifted matrix $K + \sigma M$ and
one multiplication by $M$.

The resulting algorithm is:

\begin{enumerate}
  \item choose the shifts $\sigma_j(t_i)$ that arise from the rational heat-kernel
    approximation in the main text;
  \item for each shift, estimate $\operatorname{tr}((K + \sigma_j(t_i) M)^{-1}M)$ using
    $m$ random probe vectors;
  \item reuse the same probe vectors across shifts when possible so that the
    trace estimates are correlated and the cost is dominated by the linear solves;
  \item combine the trace estimates to form $\widehat Z_h(t_i)$ and then insert
    them into the outer quadrature for $\|\mu_h\|_{H'_w}^2$.
\end{enumerate}

The method avoids full diagonalization of the generalized eigenproblem and
relies only on shifted sparse solves with $K+\sigma M$ and mass-matrix
multiplications. In large problems one can exploit multi-shift solvers and a
small number of randomized probe vectors to estimate the traces efficiently.

Practical remarks:

\begin{itemize}
  \item The trace estimator is unbiased, and
  $\operatorname{Var}(\widehat{\operatorname{tr}}(A))=\operatorname{Var}(X)/m$.
  Hence the variance decreases like $m^{-1}$, and the RMS error like $m^{-1/2}$.
  \item By the theorem above, Rademacher probes are variance-optimal in this i.i.d.
  probe class for symmetric $A$.
  \item Using the same probes across shifts typically reduces noise in the final
  quadrature sum.
  \item The outer quadrature error, the rational approximation error, and the
  trace estimation error all contribute to the final norm approximation error.
\end{itemize}

\subsection{Convergence and Error Estimate}

For each fixed shift $\sigma$ and quadrature node $t_i$, the estimator converges
at the usual Monte Carlo rate. From the theorem above,
$\widehat{\operatorname{tr}}(A)=\frac1m\sum_{r=1}^m X_r$ with i.i.d.
$X_r=\xi_r^\top A\xi_r$ satisfies

$$
\mathbb{E}\bigl[|\widehat{\operatorname{tr}}(A)-\operatorname{tr}(A)|^2\bigr]
= \frac{1}{m}\Bigl(2\|A\|_F^2 + (s_4-3)\sum_{i=1}^n A_{ii}^2\Bigr),
$$

so the root-mean-square error is $O(m^{-1/2})$ and the fluctuations decay like
$m^{-1/2}$ as the number of probes $m$ grows.
Applied to $A=(K+\sigma M)^{-1}M$, this gives

$$
\widehat\tau(\sigma)-\tau(\sigma)=O_{\mathbb P}(m^{-1/2}),
$$

up to a prefactor depending on the size of the shifted inverse.

For the full approximation of $\|\mu_h\|_{H'_w}^2$, the dominant errors are
the outer quadrature error, the rational-approximation error for the heat
kernel, and the stochastic trace-estimation error. A convenient way to view the
result is

$$
\bigl|\widehat{\|\mu_h\|_{H'_w}^2}-\|\mu_h\|_{H'_w}^2\bigr|
= O\bigl(\varepsilon_{\mathrm{quad}} + \varepsilon_{\mathrm{rat}} +
\varepsilon_{\mathrm{tr}}\bigr),
$$

with $\varepsilon_{\mathrm{tr}} = O_{\mathbb P}(m^{-1/2})$.
Thus the method converges as the quadrature is refined, the rational
approximation is improved, and the number of random probes is increased.

This appendix therefore provides the self-contained matrix-based derivation
that complements the main numerical verification section.

%% file: appendix-rational-approximation-gradient.tex
\section{Closed Forms and Gradients for Pole Optimization}
\label{app:rational-approximation-gradient}

The quadratic-form reduction in Proposition~\ref{prop:quadratic-form-elimination} eliminates the
linear coefficients for fixed poles and leaves a reduced objective in the pole parameters.
For the model weight used in the numerical optimization, the resulting matrix entries and gradient
are available in closed form.
This explicit gradient is important in practice because it enables efficient gradient-based
optimization algorithms for locating the poles without repeated numerical quadrature of the
objective or its derivatives.

\begin{proposition}[Model-weight closed forms and reduced gradient]
    \label{prop:model-weight-closed-forms}
    Let $\kappa>0$ and let $\omega(t)=t^\alpha e^{-\delta t}$ with $\delta\geq0$.
    Assume $\alpha<1$, $\alpha<\beta$, $\alpha<2\beta-1$, and let the $\gamma_j>0$ be pairwise distinct
    for $j=1,\dots,n$.
    Then the Gram matrix $A(\gamma)$ is invertible, and we set $u(\gamma):=A(\gamma)^{-1}b(\gamma)$.

    Then:
    \begin{enumerate}
        \item The quantities in the quadratic form satisfy
              \[
                  a=\Gamma(2\beta-1-\alpha)\,(2\kappa^2+\delta)^{-(2\beta-1-\alpha)},
              \]
              \[
                  b_j(\gamma)=\Gamma(\beta-\alpha)(2\kappa^2+\gamma_j+\delta)^{-(\beta-\alpha)},
                  \qquad
                  A_{jk}(\gamma)=\Gamma(1-\alpha)(2\kappa^2+\gamma_j+\gamma_k+\delta)^{-(1-\alpha)}.
              \]

        \item The reduced objective has gradient
              \[
                  \partial_{\gamma_i}\widehat{\mathcal{J}}_n(\gamma)
                  =-2\,(\partial_{\gamma_i}b(\gamma))^T u(\gamma)
                  +u(\gamma)^T(\partial_{\gamma_i}
                  A(\gamma))u(\gamma).
              \]
              Hence
              \[
                  \begin{aligned}
                      \partial_{\gamma_i}\widehat{\mathcal{J}}_n(\gamma)
                       & =2(\beta-\alpha)\Gamma(\beta-\alpha)
                      (2\kappa^2+\gamma_i+\delta)^{-(\beta-\alpha+1)}u_i \\
                       & \quad-2(1-\alpha)\Gamma(1-\alpha)u_i
                      \sum_{k=1}^n u_k(2\kappa^2+\gamma_i+\gamma_k+\delta)^{-(2-\alpha)}.
                  \end{aligned}
              \]
    \end{enumerate}
\end{proposition}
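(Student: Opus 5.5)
The plan is to reduce everything to the single Gamma-integral identity
\[
\int_0^\infty t^{s-1}e^{-pt}\,dt=\Gamma(s)\,p^{-s},\qquad s>0,\ p>0,
\]
applied to the quadratic-form data of Proposition~\ref{prop:quadratic-form-elimination}. Then I would differentiate the reduced objective $\widehat{\mathcal{J}}_n(\gamma)=a-b^TA^{-1}b$.

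\textbf{Step 1: closed forms.} Each of $a$, $b_j$, $A_{jk}$ is an integral of a product of $f_{\beta,\kappa}$ and the $e_j^\gamma$ against $dt/\omega(t)$. Writing out the integrands gives, respectively,
\[
t^{2\beta-2-\alpha}e^{-2\kappa^2 t},\qquad t^{\beta-1-\alpha}e^{-(2\kappa^2+\gamma_j)t},\qquad t^{-\alpha}e^{-(2\kappa^2+\gamma_j+\gamma_k)t},
\]
each multiplied by the exponential factor coming from $1/\omega$. The exponents are $s=2\beta-1-\alpha$, $s=\beta-\alpha$ and $s=1-\alpha$. The hypotheses $\alpha<2\beta-1$, $\alpha<\beta$ and $\alpha<1$ are exactly the conditions for convergence at $t=0$, so all three quantities are finite.

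There is a sign issue that must be settled first. Literally $1/\omega=t^{-\alpha}e^{+\delta t}$, which would produce $2\kappa^2-\delta$ and would require $\delta<2\kappa^2$. The stated formulas with $+\delta$ correspond to the exponential factor $e^{-\delta t}$ in $dt/\omega$. I would fix the convention consistently with Theorem~\ref{thm:joint-weight-admissibility}, where $\|\phi_\kappa\|_{H_w}$ carries $2\kappa^2-\delta$. I would then either record the formulas with $\delta$ replaced by $-\delta$, or note that the appendix's $\delta$ is the decay rate of $1/\omega$. With that convention fixed, the identity gives the displayed formulas immediately.

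\textbf{Step 2: invertibility.} $A(\gamma)$ is the Gram matrix of $e_1^\gamma,\dots,e_n^\gamma$ in $L^2(\omega^{-1})$, so $c^TAc=\|\sum_j c_je_j^\gamma\|_{\omega^{-1}}^2\ge 0$. Equality forces $\sum_j c_je^{-(\kappa^2+\gamma_j)t}=0$ almost everywhere, hence everywhere by continuity. Exponentials with pairwise distinct rates are linearly independent: order the rates and let $t\to\infty$ after multiplying by the slowest exponential, or argue via a Vandermonde determinant. This gives $c=0$, so $A$ is positive definite and $u=A^{-1}b$ is well defined and smooth in $\gamma$.

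\textbf{Step 3: gradient.} I would use $\partial(A^{-1})=-A^{-1}(\partial A)A^{-1}$ and the symmetry of $A$. This gives
\[
\partial_{\gamma_i}\widehat{\mathcal{J}}_n=-2(\partial_{\gamma_i}b)^Tu+u^T(\partial_{\gamma_i}A)u.
\]
Alternatively, the envelope theorem applied to $\min_c\mathcal{J}_n$ gives the same formula. The derivatives are sparse:
\[
\partial_{\gamma_i}b_j=-\delta_{ij}(\beta-\alpha)\Gamma(\beta-\alpha)(2\kappa^2+\gamma_i+\delta)^{-(\beta-\alpha+1)}.
\]
Also, $\partial_{\gamma_i}A_{jk}$ is nonzero only when $j=i$ or $k=i$, where it equals $-(1-\alpha)\Gamma(1-\alpha)(2\kappa^2+\gamma_j+\gamma_k+\delta)^{-(2-\alpha)}$. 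The diagonal entry $A_{ii}$ depends on $\gamma_i$ twice, so its derivative is doubled. Counting the row and column contributions and the doubled diagonal together gives $u^T(\partial_{\gamma_i}A)u=2u_i\sum_k u_k\,\partial_{\gamma_i}A_{ik}|_{\text{one slot}}$. Substituting yields the stated expression.

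The only real obstacle is the $\delta$-sign convention in Step 1. Everything else is routine bookkeeping, with the factor of $2$ from the diagonal term being the one point that needs care.
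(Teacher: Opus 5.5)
Your proposal follows the paper's proof: the Gamma integral $\int_0^\infty t^{p-1}e^{-qt}\,dt=\Gamma(p)q^{-p}$ gives $a$, $b_j$, $A_{jk}$, and $\partial_{\gamma_i}(A^{-1})=-A^{-1}(\partial_{\gamma_i}A)A^{-1}$ together with the row/column doubling gives the gradient, and your handling of the factor $2$ is right. Your two additions are correct and go beyond the paper: the paper's proof uses $q=2\kappa^2+\delta$, although $1/\omega=t^{-\alpha}e^{\delta t}$ gives $2\kappa^2-\delta$ and requires $\delta<2\kappa^2$ (as in Theorem~\ref{thm:joint-weight-admissibility}), so the displayed formulas in both parts need the sign change you propose; and the paper asserts invertibility of $A(\gamma)$ without proof, which your Gram-matrix and linear-independence argument supplies.
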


\begin{proof}
    The entries of the quadratic form follow from \[ \int_0^\infty t^{p-1}e^{-qt}\,dt=\Gamma(p)q^{-p},
        \qquad p,q>0, \] applied with \[ (p,q)=(2\beta-1-\alpha,2\kappa^2+\delta),\quad
        (p,q)=(\beta-\alpha,2\kappa^2+\gamma_j+\delta),\quad
        (p,q)=(1-\alpha,2\kappa^2+\gamma_j+\gamma_k+\delta).
    \]
    The hypotheses ensure that all these parameters are positive, giving the
    formulas for $a$, $b_j(\gamma)$, and $A_{jk}(\gamma)$ in part~(1).

    For part~(2), set $u=A^{-1}b$.
    Since $A$ is symmetric, differentiation of $A^{-1}A=I$ gives \[ \partial_{\gamma_i}(A^{-1})
        =-A^{-1}(\partial_{\gamma_i}A)A^{-1}.
    \]
    Therefore
    \[
        \partial_{\gamma_i}(b^T A^{-1}b)
        =2(\partial_{\gamma_i}b)^T u
        -u^T(\partial_{\gamma_i}
        A)u, \] and hence \[ \partial_{\gamma_i}\widehat{\mathcal{J}}_n =-2(\partial_{\gamma_i}b)^T u
        +u^T(\partial_{\gamma_i}A)u.
    \]
    Differentiating the formulas in part~(1) yields
    \[
        \partial_{\gamma_i}b_i
        =-(\beta-\alpha)\Gamma(\beta-\alpha)
        (2\kappa^2+\gamma_i+\delta)^{-(\beta-\alpha+1)}
    \]
    and
    \[
        \partial_{\gamma_i}
        A_{ik} =-(1-\alpha)\Gamma(1-\alpha) (2\kappa^2+\gamma_i+\gamma_k+\delta)^{-(2-\alpha)}.
    \]
    The other components of $\partial_{\gamma_i}b$ vanish.
    Since $A$ is symmetric, the derivative of the quadratic form includes both the $i$th row and
    column, producing the displayed factor of $2$ and the stated componentwise gradient formula.
\end{proof}

%% file: appendix-tensor-product-fem.tex
\section{Tensor-product FEM Spectral Construction}
\label{app:tensor-product-fem}

This appendix records the tensor-product derivation used in the numerical
experiments on aligned Cartesian meshes.

For aligned Cartesian meshes on the square and cube, the finite element
construction is a tensor product of 1D spaces. In 2D with bilinear
$Q_1$ quadrilaterals, the stiffness and mass matrices factor as
\[
K = K_x \otimes M_y + M_x \otimes K_y,
\qquad
M = M_x \otimes M_y.
\]
In 3D with trilinear $Q_1$ hexahedra, the analogous factorization includes
$z$-direction contributions.

When the global space is a tensor product of 1D spaces, the generalized
problem reduces to lower-dimensional eigenproblems:
\[
K_x u_i = \mu_i M_x u_i,
\qquad
K_y v_j = \nu_j M_y v_j,
\qquad
K_z w_k = \xi_k M_z w_k.
\]
The discrete eigenvalues are then additive combinations of 1D eigenvalues:
\[
\lambda_{ij} = \mu_i + \nu_j \qquad \text{in 2D},
\]
\[
\lambda_{ijk} = \mu_i + \nu_j + \xi_k \qquad \text{in 3D}.
\]

For heat-trace based quantities this also yields separable formulas. In 2D,
\[
Z_h(t) = \sum_{i,j} e^{-(\mu_i+\nu_j)t}
= \left(\sum_i e^{-\mu_i t}\right)\left(\sum_j e^{-\nu_j t}\right),
\]
and in 3D,
\[
Z_h(t)
= \sum_{i,j,k} e^{-(\mu_i+\nu_j+\xi_k)t}
= \left(\sum_i e^{-\mu_i t}\right)
\left(\sum_j e^{-\nu_j t}\right)
\left(\sum_k e^{-\xi_k t}\right).
\]

This construction provides an inexpensive way to assemble aligned-mesh spectra,
to check separability numerically, and to compare full global solves with
1D-based decompositions.